\renewcommand{\mathcal}{\mathscr}
\theoremstyle{definition}
\newtheorem{ntn}{Notation}[section]
\newtheorem{dfn}[ntn]{Definition}
\theoremstyle{plain}
\newtheorem{lem}[ntn]{Lemma}
\newtheorem{prp}[ntn]{Proposition}
\newtheorem{thm}[ntn]{Theorem}
\newtheorem{cor}[ntn]{Corollary}
\theoremstyle{remark}
\newtheorem{rmk}[ntn]{Remark}
\newtheorem{exa}[ntn]{Example}
\numberwithin{equation}{section}
\newcommand{\ideal}[1]{{\left\langle#1\right\rangle}}
\newcommand{\xymat}{\SelectTips{cm}{}\xymatrix}
\newcommand{\into}{\hookrightarrow}
\newcommand{\p}{\partial}
\newcommand{\wt}{\widetilde}
\newcommand{\CC}{\mathds{C}}
\newcommand{\C}{\mathcal{C}}
\newcommand{\F}{\mathcal{F}}
\newcommand{\I}{\mathcal{I}}
\newcommand{\J}{\mathcal{J}}
\renewcommand{\L}{\mathcal{L}}
\newcommand{\M}{\mathcal{M}}
\renewcommand{\O}{\mathcal{O}}
\newcommand{\pp}{\mathfrak{p}}
\newcommand{\R}{\mathcal{R}}
\newcommand{\mm}{\mathfrak{m}}
\DeclareMathOperator{\Ass}{Ass}
\DeclareMathOperator{\depth}{depth}
\DeclareMathOperator{\Der}{Der}
\DeclareMathOperator{\End}{End}
\DeclareMathOperator{\Ext}{Ext}
\DeclareMathOperator{\Hom}{Hom}
\DeclareMathOperator{\Spec}{Spec}
\DeclareMathOperator{\RHom}{RHom}
\begin{document}

\title[Normal crossing properties of complex hypersurfaces]
{Normal crossing properties of complex hypersurfaces via logarithmic residues}

\author[M.~Granger]{Michel Granger}
\address{
M. Granger\\
Universit\'e d'Angers, D\'epartement de Math\'ematiques\\ 
LAREMA, CNRS UMR n\textsuperscript{o}6093\\ 
2 Bd Lavoisier\\ 
49045 Angers\\ 
France
}
\email{\href{granger@univ-angers.fr}{granger@univ-angers.fr}}

\author[M.~Schulze]{Mathias Schulze}
\address{M.~Schulze\\
Department of Mathematics\\
University of Kaiserslautern\\
67663 Kaiserslautern\\
Germany}
\email{\href{mailto:mschulze@mathematik.uni-kl.de}{mschulze@mathematik.uni-kl.de}}
\thanks{The research leading to these results has received funding from the People Programme (Marie Curie Actions) of the European Union's Seventh Framework Programme (FP7/2007-2013) under REA grant agreement n\textsuperscript{o} PCIG12-GA-2012-334355.}


\subjclass{32S25, 32A27, 32C37}

\keywords{logarithmic residue, duality, free divisor, normal crossing}

\begin{abstract}
We introduce a dual logarithmic residue map for hypersurface singularities and use it to answer a question of Kyoji Saito.
Our result extends a theorem of L\^e and Saito by an algebraic characterization of hypersurfaces that are normal crossing in codimension one.
For free divisors, we relate the latter condition to other natural conditions involving the Jacobian ideal and the normalization.
This leads to an algebraic characterization of normal crossing divisors.
As a side result, we describe all free divisors with Gorenstein singular locus.
\end{abstract}

\maketitle
\tableofcontents

\section{Introduction}

Let $S$ be a complex manifold and $D$ be a (reduced) hypersurface $D$, referred to as a \emph{divisor} in the sequel. 
In the landmark paper \cite{Sai80}, Kyoji Saito introduced the sheaves of $\O_S$-modules of logarithmic differential forms and logarithmic vector fields on $S$ along $D$.
Logarithmic vector fields are tangent to $D$ at any smooth point of $D$; logarithmic differential forms have simple poles and form a complex under the usual differential.
Saito's clean algebraically flavored definition encodes deep geometric, topological, and representation-theoretic information on the singularities that is yet only partly understood.
The precise target for his theory was the Gau\ss--Manin connection on the base $S$ of the semiuniversal deformation of isolated hypersurface singularities, a logarithmic connection along the discriminant $D$.
Saito developed mainly three aspects of his logarithmic theory in \cite{Sai80}: free divisors, logarithmic stratifications, and logarithmic residues.

A divisor is called free if the sheaf of logarithmic vector fields, or its dual, the sheaf of logarithmic $1$-forms, is a vector bundle; in particular, normal crossing divisors are free.
Not surprisingly, discriminants of isolated hypersurface singularities are free divisors (see \cite[(3.19)]{Sai80}). 
Similar results were shown for isolated complete intersection singularities (see \cite[\S6]{Loo84}) and space curve singularities (see \cite{vST95}).
Both the reflection arrangements and discriminants associated with finite unitary reflection groups are free divisors (see \cite{Ter80b}).
More recent examples include discriminants in certain prehomogeneous vector spaces (see \cite{GMS11}) whose study led to new constructions such as a chain rule for free divisors (see \cite[\S4]{BC12}).
Free divisors can be seen as the extreme case opposite to isolated singularities: unless smooth, free divisors have Cohen--Macaulay singular loci of codimension one.
The freeness property is closely related to the complement of the divisor being a $K(\pi,1)$-space (see \cite[(1.12)]{Sai80} and \cite{Del72}), although these two properties are not equivalent (see \cite{ER95}). 
Even in special cases, such as that of hyperplane arrangements, freeness is not fully understood yet. 
For instance, Terao's conjecture on the combinatorial nature of freeness for arrangements is one of the central open problems in arrangement theory.

Much less attention has been devoted to Saito's logarithmic residues, the main topic in this paper.
It was Poincar\'e who first defined a residual $1$-form of a rational differential $2$-form on $\CC^2$ (see \cite{Poi87}).
Later, the concept was generalized by de Rham and Leray to residues of closed meromorphic $p$-forms with simple poles along a smooth divisor $D$; these residues are holomorphic $(p-1)$-forms on $D$ (see \cite{Ler59}).
Residues of logarithmic differential forms along singular namely normal crossing divisors first appeared
in Deligne's construction of the mixed Hodge structure on the cohomology of smooth possibly non-compact complex varieties (see \cite[\S3.1]{Del71}); again these residues are holomorphic differential forms.
Notably, in Saito's generalization to arbitrary singular divisors $D$, the residue of a logarithmic $p$-form becomes a \emph{meromorphic} $(p-1)$-form on $D$, or on its normalization $\wt D$. 
Using work of Barlet~\cite{Bar78}, Aleksandrov linked Saito's construction to Grothendieck duality theory: the image of Saito's logarithmic residue map is the module of regular differential forms on $D$ (see \cite[\S4, Thm.]{Ale88} and \cite{Bar78}).
With Tsikh, he suggested a generalization for complete intersection singularities based on multilogarithmic differential forms depending on a choice of generators of the defining ideal (see \cite{AT01}). 
Recently, he approached the natural problem of describing the mixed Hodge structure on the complement of certain divisors in terms of logarithmic differential forms (see \cite[\S8]{Ale12}).
In Dolgachev's work, one finds a different sheaf of logarithmic differential forms which is a vector bundle exactly for normal crossing divisors and whose reflexive hull is Saito's sheaf of logarithmic differential forms (see \cite{Dol07}).
Although his approach to logarithmic residues using adjoint ideals has a similar flavor to ours, he does not reach the conclusion of our main Theorem~\ref{10} (see Remark~\ref{49b}).

\medskip

While most constructions in Saito's logarithmic theory and its generalizations have a dual counterpart, a notion of a dual logarithmic residue associated to a vector field was not known to the authors.
The main motivation and fundamental result of this article is the construction of a dual logarithmic residue (see Section~\ref{13}).
This turned out to have surprising applications including a proof of a conjecture of Saito, that was open for more than 30 years.
Saito's conjecture is concerned with comparing logarithmic residues of $1$-forms, that is, certain meromorphic functions on $\wt D$, with holomorphic functions on $\wt D$.
The latter can also be considered as \emph{weakly holomorphic} functions on $D$, that is, functions on the complement of the singular locus $Z$ of $D$, locally bounded near points of $Z$.
While any such weakly holomorphic function is the residue of some logarithmic $1$-form, the image of the residue map can contain functions which are not weakly holomorphic.
The algebraic condition of equality was related by Saito to a geometric and a topological property as follows (see \cite[(2.13)]{Sai80}).

\begin{thm}[Saito]\label{28}
For a divisor $D$ in a complex manifold $S$, consider the following conditions:
\begin{enumerate}[(A)]
\item\label{28a} the local fundamental groups of the complement $S\backslash D$ are Abelian;
\item\label{28b} in codimension one, that is, outside of an analytic subset of codimension at least $2$ in $D$, $D$ is normal crossing;
\item\label{28c} the residue of any logarithmic $1$-form along $D$ is a weakly holomorphic function on $D$.
\end{enumerate}
Then the implications \eqref{28a} $\Rightarrow$ \eqref{28b} $\Rightarrow$ \eqref{28c} hold true.
\end{thm}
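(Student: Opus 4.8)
The plan is to prove both implications locally and generically along the singular locus $Z=\Sing D$, reducing in each case to the transverse plane curve that cuts $D$ in codimension one. Fix a generic point $p$ of a codimension-one (in $D$) irreducible component of $Z$. There $Z$ is smooth of codimension two in $S$ and the transverse singularity type is constant, so the germ $(S,D)$ at $p$ is analytically isomorphic to a product of a smooth factor with a plane curve germ $(\CC^2,C)$ at $0$. Both conditions (B) and (C) are statements about the behaviour of $D$ exactly along this codimension-one locus, so it suffices to analyze this transverse picture.

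For the implication (A)$\Rightarrow$(B), I would argue as follows. Crossing with a smooth factor does not change the fundamental group of the complement, so the local fundamental group of $S\setminus D$ at $p$ equals that of $\CC^2\setminus C$, which in turn is the fundamental group of the complement $S^3\setminus K$ of the algebraic link $K$ of $C$. By hypothesis (A) this group is abelian. The crucial topological input---and the main obstacle---is the classical fact that a link in $S^3$ has abelian complement group precisely when it is the unknot or the Hopf link. Granting this, $C$ is either smooth or the ordinary node $\{xy=0\}$, so $D$ is normal crossing at $p$. Since $p$ ranges over all generic points of all codimension-one components of $Z$, the divisor $D$ is normal crossing outside a set of codimension at least two in $D$, which is (B).

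For the implication (B)$\Rightarrow$(C), recall that the residue $\res\omega$ of a logarithmic $1$-form $\omega$ is a meromorphic function on the normalization $\wt D$, manifestly holomorphic over the smooth locus $D\setminus Z$. Since $\wt D$ is normal, the second Riemann extension theorem reduces weak holomorphy to holomorphy in codimension one, that is, at the generic points of $Z$. By (B) the divisor is normal crossing there, so in suitable coordinates $D=\{x_1x_2=0\}$ and $\Omega^1_S(\log D)$ is generated over $\O_S$ by $dx_1/x_1$, $dx_2/x_2$ and the $dx_i$ with $i\ge3$. The residues of these generators are the locally constant functions taking the value $1$ or $0$ on the two branches of $\wt D$, hence holomorphic; therefore $\res\omega$ is holomorphic on $\wt D$ away from a subset of codimension at least two. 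Normality of $\wt D$ then forces $\res\omega$ to be holomorphic on all of $\wt D$, that is, weakly holomorphic on $D$, which is (C). I expect this direction to be essentially formal once the residue formalism and the normal-crossing model are in place; the genuine difficulty of the theorem sits in the knot-theoretic classification used for (A)$\Rightarrow$(B).
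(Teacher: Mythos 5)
The paper does not actually prove Theorem~\ref{28}; it is quoted from Saito \cite[(2.13)]{Sai80}. Your outline follows the lines of Saito's original argument: reduce to the generic transverse plane-curve slice, use the classification of (algebraic) links with abelian complement group for (A)$\Rightarrow$(B), and use the explicit generators of $\Omega^1(\log D)$ at a normal crossing point together with normality of $\wt D$ for (B)$\Rightarrow$(C). The second implication as you present it is sound, and you are right that the knot-theoretic classification is the real content of the first.

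The one genuine flaw is the opening reduction: constancy of the transverse singularity type at a generic point $p$ of a codimension-one component of $Z$ does \emph{not} give an analytic isomorphism of $(S,D)$ with a product of a smooth germ and a plane curve germ. The paper's own example $xy(x+y)(x+yz)=0$ shows this: along the $z$-axis the transverse topological type (four concurrent lines) is constant, but the cross-ratio varies, so the analytic type of the slice is not even locally constant and the germ is not a product. What is true, and what your $\pi_1$ computation actually needs, is generic \emph{topological} local triviality along $Z$ (Whitney stratification plus the Thom--Mather isotopy lemma), valid after removing a further subset of codimension at least $2$ in $D$. This identifies the local fundamental group with $\pi_1(S^3\setminus K)$ for the link $K$ of the transverse slice, and your link-theoretic step then forces the slice to be an ordinary node (the unknot case would make $D$ smooth at $p$, contradicting $p\in Z$). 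To pass from ``transverse node'' to the \emph{analytic} statement that $D$ is normal crossing at $p$ you then need a separate argument, but here it is available precisely because nodes have no moduli: a hypersurface of multiplicity two whose generic transverse slice along a smooth codimension-two $Z$ is a node is locally the union of two smooth sheets meeting transversally, hence isomorphic to $\{x_1x_2=0\}\times(\CC^{n-2},0)$. With the product claim corrected in this way, your proof goes through.
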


Saito asked whether the the converse implications in Theorem~\ref{28} hold true.
The first one was later established by L\^e and Saito~\cite{LS84}; it generalizes the Zariski conjecture for complex plane projective nodal curves proved by Fulton and Deligne (see \cite{Ful80,Del81}).

\begin{thm}[L\^e--Saito]
The implication \eqref{28a} $\Leftarrow$ \eqref{28b} in Theorem~\ref{28} holds true.
\end{thm}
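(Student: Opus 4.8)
The plan is to prove the implication directly: assuming that $D$ is normal crossing in codimension $1$ (condition \eqref{28b}), I show that for every $p\in D$ and every sufficiently small ball $B$ about $p$ the local fundamental group $G:=\pi_1(B\setminus D)$ is Abelian (condition \eqref{28a}). The starting point is the standard reduction that $G$ is generated by meridians. Fixing a Whitney stratification of $(S,D)$ adapted to the local irreducible components $D_1,\dots,D_r$ of $D$ at $p$, a general position argument shows that $G$ is generated by small loops $m_1,\dots,m_r$ linking the smooth parts of the $D_i$, one conjugacy class per branch. Since the abelianization $H_1(B\setminus D)\cong\ZZ^r$ already has exactly one generator per branch, it suffices to prove that a compatibly based system of meridians commutes pairwise; then $G$ is a quotient of $\ZZ^r$, hence equal to $\ZZ^r$.

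The key geometric input is that \emph{every} pair of distinct branches meets in codimension exactly one in $D$. Indeed, $D_i$ and $D_j$ are hypersurface germs through $p$, so the dimension inequality for intersections in the smooth ambient space gives $\dim_p(D_i\cap D_j)\ge \dim D_i+\dim D_j-\dim S=\dim D-1$; on the other hand $D_i\cap D_j$ is contained in the singular locus of $D$, which has codimension at least one in $D$. Hence $\dim_p(D_i\cap D_j)=\dim D-1$. At a generic point $q$ of $D_i\cap D_j$ the hypothesis \eqref{28b} forces $D$ to be normal crossing; as $q$ has codimension one in $D$, this normal crossing has exactly two sheets, namely $D_i$ and $D_j$ (a normal crossing of three or more sheets occurs only in codimension at least two in $D$, which excludes in particular the case of three concurrent branches). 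A transverse $2$-disk then meets $D$ in two smooth arcs crossing transversally, so the complement of $D$ in that disk has fundamental group $\ZZ^2$, and the two meridians attached to $D_i$ and $D_j$ at $q$ commute.

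It remains to transport these local commutation relations to a common base point and to show they exhaust the relations of $G$. For the transport one exploits that the locus where $D$ fails to be normal crossing has codimension at least two in $D$, hence at least three in $S$: removing it affects neither $\pi_1$ of the ambient ball nor the meridians, and the smooth locus together with the codimension-one crossing locus remains connected, so that representatives $\widetilde m_1,\dots,\widetilde m_r$ based at one point can be chosen and each pair realized as a pair of commuting node meridians after conjugation. That these relations suffice — that no deeper stratum contributes a genuinely non-Abelian relation — would be established by a Zariski--van Kampen argument along a generic pencil of hyperplanes, the generators being meridians in a generic member and the relations arising from the monodromy around the singular members. Because the only codimension-one singularities of $D$ are nodes, each surviving relation is a commutator, whence $G$ is Abelian.

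The hard part is precisely this last step. One must verify both that the commuting meridians can be chosen consistently at a single base point, so that the conjugating elements produced at the various generic crossings are mutually compatible, and that the higher-codimension strata — where $D$ may be a more degenerate normal crossing or may even lie in the bad locus — introduce no relation destroying commutativity. This is the genuine content of the Lê--Saito theorem, and it is where the global topology of $B\setminus D$ really enters: a naive reduction by slicing with a single hyperplane through $p$ fails. The example $D=\{x_1x_2x_3=0\}$, which is normal crossing everywhere and has local group $\ZZ^3$, illustrates this, since a generic plane through $0$ cuts $D$ in three concurrent lines whose germ complement has non-Abelian fundamental group.
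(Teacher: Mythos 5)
The paper does not prove this statement: it is quoted from L\^e and Saito \cite{LS84}, so your attempt has to stand on its own, and it does not. The part you do carry out is fine: any two local branches $D_i,D_j$ through $p$ satisfy $\dim_p(D_i\cap D_j)\ge\dim D-1$ by the intersection inequality in the smooth ambient germ, so $D_i\cap D_j$ cannot be contained in the bad locus of codimension $\ge 2$ in $D$; at a generic point of it hypothesis \eqref{28b} gives a two-sheeted normal crossing, and a transverse $2$-disk exhibits two commuting meridians of $D_i$ and $D_j$ there. This is the easy half.

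The gap is exactly the step you yourself label ``the hard part,'' and it is the entire content of the theorem. First, a single meridian per branch only \emph{normally} generates $\pi_1(B\setminus D,*)$ a priori; that the chosen $m_1,\dots,m_r$ actually generate is not automatic (the free group on two generators is normally generated by them and has abelianization $\ZZ^2$, so knowing $H_1\cong\ZZ^r$ with one generator per branch proves nothing). Second, the commutation you obtain at a generic double point is between \emph{some} conjugates $g m_i g^{-1}$ and $h m_j h^{-1}$, and you never show the conjugating paths can be chosen compatibly so that $[m_i,m_j]=1$ holds for the fixed based generators. Your appeal to a Zariski--van Kampen argument along a generic pencil is precisely where the theorem lives and cannot be waved at: as your own example $\{x_1x_2x_3=0\}$ shows, a generic hyperplane through $p$ cuts $D$ in a non-nodal curve germ, so there is no direct reduction to the Fulton--Deligne nodal case, and the correct argument in \cite{LS84} is a delicate induction on dimension with a carefully positioned pencil. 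As written, the proposal is a plausible strategy outline with the central argument missing, not a proof.
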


Our duality of logarithmic residues turns out to translate condition \eqref{28c} in Theorem~\ref{28} into the more familiar equality of the Jacobian ideal and the conductor ideal.
A result of Ragni Piene~\cite{Pie79} proves that such an equality forces $D$ to have only smooth components if it has a smooth normalization. 
This is a technical key point which leads to a proof of the missing implication in Theorem~\ref{28}.

\begin{thm}\label{10}
The implication \eqref{28b} $\Leftarrow$ \eqref{28c} in Theorem~\ref{28} holds true: if the residue of any logarithmic $1$-form along $D$ is a weakly holomorphic function on $D$ then $D$ is normal crossing in codimension one.
\end{thm}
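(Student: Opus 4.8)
The plan is to run the hypothesis through the duality of logarithmic residues constructed in Section~\ref{13} and then reduce the geometric conclusion to a one-dimensional problem. Concretely, the dual residue reformulates condition \eqref{28c} of Theorem~\ref{28} as the equality $\J_D=\C_D$ between the Jacobian ideal $\J_D$ of $D$ and the conductor $\C_D$ of the normalization $\wt D\to D$ in $\O_D$; I take this algebraic criterion as the starting point. Since being normal crossing in codimension $1$ only concerns generic points of the codimension-one part of the singular locus $Z=\Sing D$, it suffices to fix an irreducible component $Z'$ of $Z$ that has codimension $1$ in $D$ (hence codimension $2$ in $S$) and to prove that the germ of $D$ at a generic point $p\in Z'$ is normal crossing.

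At such a generic $p$ the pair $(D,Z')$ carries a product structure transverse to $Z'$, so cutting with a two-dimensional submanifold $T\subseteq S$ meeting $Z'$ transversally at $p$ replaces $(D,p)$, up to a smooth factor, by a reduced plane curve germ $C=D\cap T$; normal crossing of $D$ at $p$ is then equivalent to $C$ being a node. The key technical point of this reduction is that the equality $\J_D=\C_D$ descends to the slice, that is $\J_C=\C_C$, because at a generic point transverse slicing is compatible with forming both the Jacobian ideal and the conductor. Now $C$, being a reduced curve, has a smooth normalization, so Piene's theorem \cite{Pie79} applies verbatim: the equality $\J_C=\C_C$ forces every branch of $C$ to be smooth.

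It remains to promote \emph{smooth branches} to \emph{node}. Writing $C=C_1\cup\dots\cup C_r$ with each $C_i$ smooth, the always-valid inclusion $\J_C\subseteq\C_C$ together with the colength identities $\dim_\CC\O_C/\C_C=\delta$ and $\dim_\CC\O_C/\J_C=\tau$ shows that $\J_C=\C_C$ is equivalent to $\tau=\delta$, where $\delta$ is the delta-invariant and $\tau$ the Tjurina number. For smooth branches one has $\delta=\sum_{i<j}(C_i\cdot C_j)$, and the conductor is the honest $\wt{\O}_C$-ideal with exponent $\sum_{j\ne i}(C_i\cdot C_j)$ on the $i$-th branch; a direct computation of the Jacobian ideal along the normalization then shows that $\tau=\delta$ can occur only when $r=2$ and $(C_1\cdot C_2)=1$, that is, when $C$ is a node. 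This forces $D$ to be normal crossing at $p$, which proves the theorem. I expect the last step to be the main obstacle: the conductor $\C_C$ is by construction stable under the normalization, whereas the Jacobian ideal $\J_C$ is merely an $\O_C$-ideal, so even configurations whose Jacobian and conductor agree after extension to $\wt{\O}_C$ (for instance two tangent smooth branches) still fail the equality in $\O_C$. Pinning down exactly when the \emph{non-saturated} Jacobian ideal coincides with the conductor, and thereby excluding tangencies as well as points of multiplicity $\ge3$, is the delicate part of the argument.
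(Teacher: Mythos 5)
Your first step---translating condition \eqref{28c} into $\J_{D,p}=\C_{D,p}$ at all points outside a subset of codimension $2$ in $D$---matches the paper (Corollaries~\ref{27} and \ref{42}); note only that this translation genuinely requires freeness of $D$ at $p$, so Corollary~\ref{27} must be invoked before Corollary~\ref{42}. The serious problem is your reduction to a plane curve slice. You assert that at a generic point $p$ of a codimension-one component $Z'$ of $Z$ the pair $(D,Z')$ ``carries a product structure transverse to $Z'$''. This is false in general: the free divisor $xy(x+y)(x+yz)=0$ of Remark~\ref{49b} has holonomic codimension $0$, the cross-ratio of its four sheets varies along the $z$-axis, and the germ at a generic point of that axis is not a product of a plane curve with a smooth factor. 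This is exactly the pitfall the paper flags: Saito's plane-curve case plus analytic triviality along logarithmic strata settles the theorem only for divisors of holonomic codimension at least $1$. Without the product structure your descent claim $\J_D=\C_D\Rightarrow\J_C=\C_C$ also breaks: the Jacobian ideal of the slice omits the partial derivatives of $h$ in the directions along $Z'$, so one only obtains $\J_C\subseteq\J_D\vert_C=\C_D\vert_C$, which (granting compatibility of the conductor with generic slicing) is just the inclusion $\J_C\subseteq\C_C$ valid for every curve; nothing forces the reverse inclusion on the slice.

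The paper avoids slicing altogether. Since $\wt D$ is normal, it is smooth in codimension $1$, so Piene's identity $\C_D\R_\pi=\J_D\O_{\wt D}$ is applied to $D$ itself (Lemma~\ref{7}): combined with $\J_D=\C_D$ and Nakayama's lemma it yields $\Omega^1_{\wt D/D}=0$, hence the local irreducible components $D_i$ of $(D,p)$ are smooth. Transversality is then \emph{not} extracted from $\J_D=\C_D$ by a $\tau=\delta$ count (your admitted ``main obstacle'', which you leave unproved), but from the full residue hypothesis: smoothness of the components turns $\R_{D,p}=\O_{\wt D,p}$ into $\R_{D,p}=\bigoplus_i\O_{D_i}$, and Saito's Theorem~\ref{60}, implication \eqref{60c}~$\Rightarrow$~\eqref{60d} (whose proof rests on the computations in parts \eqref{48b} and \eqref{48c} of Example~\ref{48}), gives transversal intersection in codimension $1$. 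To repair your argument you would have to both eliminate the product-structure assumption and supply the missing node characterization; the paper's route through Lemma~\ref{7} and Theorem~\ref{60} does both at once.
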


\begin{rmk}\label{49b}
The logarithmic stratification of $S$ mentioned in the introduction consists of immersed integral manifolds of logarithmic vector fields along $D$ (see \cite[\S3]{Sai71}).
Contrary to what the terminology suggests, the resulting decomposition of $S$ is not locally finite, in general.
Saito attached the term \emph{holonomic} to this additional feature: a point in $S$ is holonomic if a neighborhood meets only finitely many logarithmic strata.
Along any logarithmic stratum, the pair $(D,S)$ is analytically trivial which turns holonomicity into a property of logarithmic strata.
The logarithmic vector fields are tangent to the strata of the canonical Whitney stratification; the largest codimension up to which all Whitney strata are (necessarily holonomic) logarithmic strata is called the \emph{holonomic codimension} (see \cite[p.~221]{DM91}).

Saito~\cite[(2.11)]{Sai80} proved Theorem~\ref{10} for plane curves.
If $D$ has holonomic codimension at least one, this yields the general case by analytic triviality along logarithmic strata (see \cite[\S3]{Sai80}).
Under this latter hypothesis, Theorem~\ref{10} follows also from a result of Dolgachev (see \cite[Cor.~2.2]{Dol07}).
However, for example, the equation $xy(x+y)(x+yz)=0$ defines a well-known free divisor with holonomic codimension zero.
\end{rmk}

The preceding results and underlying techniques serve to address two natural questions:
the algebraic characterization of condition~\eqref{28c} through Theorem~\ref{10} raises the question about the algebraic characterization of normal crossing divisors.
Eleonore Faber was working on this question at the same time as the results presented here were developed.
She considered freeness as a first approximation for being normal crossing and noted that normal crossing divisors satisfy an extraordinary condition:
the ideal of partial derivatives of a defining equation is radical. 
She proved the following converse implications (see \cite{Fab11,Fab12}).

\begin{thm}[Faber]\label{14}
Consider the following conditions:
\begin{enumerate}[(A)]\setcounter{enumi}{3}
\item\label{28e} at any point $p\in D$, there is a local defining equation $h$ for $D$ such that the ideal $\J_h$ of partial derivatives is radical;
\item\label{28f} $D$ is normal crossing.
\end{enumerate}
Then the following hold:
\begin{asparaenum}
\item\label{14a} if $D$ is free and satisfies condition~\eqref{28e} then the same holds for all irreducible components of $D$;
\item\label{14b} conditions~\eqref{28e} and \eqref{28f} are equivalent if $D$ is locally a plane curve or a hyperplane arrangement, or if its singular locus is Gorenstein;
\item\label{14c} condition~\eqref{28e} implies that $D$ is Euler homogeneous;
\item\label{14d} if $D$ is free, then condition~\eqref{28c} implies that $D$ is Euler homogeneous.
\end{asparaenum}
\end{thm}

Motivated by Faber's problem we prove the following result.

\begin{thm}\label{16}
Extend the list of conditions in Theorem~\ref{28} as follows:
\begin{enumerate}[(A)]\setcounter{enumi}{5}
\item\label{28d} the Jacobian ideal $\J_D$ of $D$ is radical;
\item\label{28g} the Jacobian ideal $\J_D$ of $D$ equals the conductor ideal $\C_D$ of the normalization $\wt D$.
\end{enumerate}
Then condition~\eqref{28d} implies condition~\eqref{28b}.
If $D$ is a free divisor then conditions~\eqref{28b}, \eqref{28d}, and \eqref{28g} are equivalent.
\end{thm}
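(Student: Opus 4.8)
The plan is to assemble the desired equivalences from the results already cited, supplemented by two genuinely new arguments: one establishing the general implication, and one handling the reverse implication under freeness. By Theorem~\ref{28} and Theorem~\ref{10} we already have \eqref{28b} $\Leftrightarrow$ \eqref{28c}, and the duality of logarithmic residues identifies \eqref{28c} with the ideal equality $\J_D=\C_D$, that is, \eqref{28c} $\Leftrightarrow$ \eqref{28g}. It therefore suffices to prove \eqref{28d} $\Rightarrow$ \eqref{28b} with no further hypothesis, together with \eqref{28b} $\Rightarrow$ \eqref{28d} for free divisors; these two implications close the cycle and, combined with the equivalences above, yield that \eqref{28b}, \eqref{28d} and \eqref{28g} are all equivalent in the free case.

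For \eqref{28d} $\Rightarrow$ \eqref{28b} I would argue pointwise along the singular locus. Normal crossing in codimension one is a condition at the generic points of the codimension-one components of $\Sing D$, so fix such a point $p$, of codimension two in $S$. After shrinking, and by generic analytic triviality of $D$ along the smooth locus of this codimension-two stratum, $D$ is a product $C\times\CC^{n-2}$, where $C$ is a reduced plane curve singularity cut out by some $g\in\CC\{x_1,x_2\}$ that also defines $D$. Then $\J_D$ is the extension of the plane Jacobian ideal $\J_C=\langle\p_{x_1}g,\p_{x_2}g\rangle$ along the faithfully flat map $\CC\{x_1,x_2\}\into\O_{S,p}$ with regular fibres, so $\J_D$ is radical if and only if $\J_C$ is. It then remains to prove the elementary plane-curve statement that a reduced $C$ with radical Jacobian ideal is a node: for an isolated plane curve singularity $\J_C$ is $\mm$-primary with $\dim_\CC\O/\J_C=\mu$ the Milnor number, whence radicality forces $\J_C=\mm$ and thus $\mu=1$, which characterizes the node, while a smooth point is trivially normal crossing. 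Running this at every generic point of $\Sing D$ gives \eqref{28b}.

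For the free case it remains to prove \eqref{28b} $\Rightarrow$ \eqref{28d}, and here the decisive input is the structure of the Jacobian ideal of a free divisor: $\O_D/\J_D$ is Cohen--Macaulay of codimension one in $D$ (the Cohen--Macaulay singular locus recalled in the introduction). Equidimensionality then excludes embedded or lower-dimensional components, so the singular scheme satisfies Serre's condition $S_1$. On the other hand, \eqref{28b} says that at the generic point of each codimension-one component of $\Sing D$ the divisor is locally $\{x_1x_2=0\}\times\CC^{n-2}$, where $\J_D=\langle x_1,x_2\rangle$ already cuts out a reduced scheme; hence $\O_D/\J_D$ satisfies $R_0$. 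By Serre's criterion, $R_0$ together with $S_1$ forces $\O_D/\J_D$ to be reduced, which is exactly \eqref{28d}. The main obstacle is this last direction: passing from the geometric normal-crossing condition to radicality of the entire Jacobian ideal genuinely uses that freeness makes the Jacobian scheme Cohen--Macaulay, so that generic reducedness can be propagated to global reducedness via $S_1$; without this structural input, $R_0$ alone would not suffice.
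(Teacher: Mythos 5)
Your overall assembly — \eqref{28b} $\Leftrightarrow$ \eqref{28c} from Theorems~\ref{28} and \ref{10}, \eqref{28c} $\Leftrightarrow$ \eqref{28g} from Corollary~\ref{42} for free $D$, and \eqref{28b} $\Rightarrow$ \eqref{28d} for free $D$ via Serre's criterion ($S_1$ from Cohen--Macaulayness of $Z$, $R_0$ from the local normal crossing model) — coincides with the paper's proof. The gap is in your argument for \eqref{28d} $\Rightarrow$ \eqref{28b}: you reduce to a plane curve by invoking ``generic analytic triviality of $D$ along the smooth locus of this codimension-two stratum.'' That is not a valid general principle. Analytic triviality of $D$ along strata of its singular locus is precisely the holonomicity issue of Remark~\ref{49b}, and it fails already for the free divisor $xy(x+y)(x+yz)=0$, which has holonomic codimension $0$: along the $z$-axis, a codimension-two component of $\Sing D$, the cross-ratio of the four local components varies, so $D$ is not a product $C\times(\CC,0)$ at any generic point of that stratum. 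The product structure therefore has to be extracted from the hypothesis rather than asserted; this is exactly why the paper routes the reduction through the proof of Theorem~\ref{40}. Concretely: since $\J_D$ is radical, $Z$ is reduced and hence smooth at the generic point of each codimension-one component, so the preimage of $\J_D$ in $\O_S$ is there the ideal of a smooth codimension-two germ and needs only two generators; since it is presented by the $n+1$ elements $h,\p_1 h,\dots,\p_n h$, these do not generate minimally, and Saito's triviality lemma \cite[(3.5)]{Sai80} (not a generic triviality of the stratification) yields $D\cong C\times(\CC^{n-2},0)$ with $C$ a plane curve. Only after this step does a plane-curve argument apply; the paper invokes the Mather--Yau theorem via \cite[Prop.~9]{Fab12}.

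A secondary, repairable slip: your plane-curve computation uses $\langle\p_{x_1}g,\p_{x_2}g\rangle$ and the Milnor number, but condition~\eqref{28d} concerns $\J_D\subset\O_D$, whose preimage in $\O_S$ is the Tjurina ideal $\langle g,\p_{x_1}g,\p_{x_2}g\rangle$. The argument survives — a radical $\mm$-primary ideal equals $\mm$, so $\tau=1$, which again characterizes the node — but it should be phrased with the correct ideal. Everything else in your proposal, in particular the free-case direction \eqref{28b} $\Rightarrow$ \eqref{28d}, is correct and is the paper's argument verbatim.
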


\begin{rmk}\label{9}
Note that $\J_h$ is an $\O_S$-ideal sheaf depending on a choice of local defining equation whereas its image $\J_D$ in $\O_D$ is intrinsic to $D$.
However, by parts \eqref{14c} and \eqref{14d} of Theorem~\ref{14}, condition~\eqref{28e} implies condition~\eqref{28d} and equivalence holds if $D$ is free.
\end{rmk}

We obtain the following algebraic characterization of normal crossing divisors.

\begin{thm}\label{38}
For a free divisor with smooth normalization, any one of the conditions~\eqref{28a}, \eqref{28b}, \eqref{28c}, \eqref{28d}, or \eqref{28g} implies condition~\eqref{28f}.
\end{thm}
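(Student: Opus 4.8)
The plan is first to collapse the five hypotheses to a single one. For a free divisor the results quoted above assemble into a circle of equivalences: Theorem~\ref{28} yields \eqref{28a}$\Rightarrow$\eqref{28b}$\Rightarrow$\eqref{28c}, the theorem of L\^e--Saito yields \eqref{28b}$\Rightarrow$\eqref{28a}, Theorem~\ref{10} yields \eqref{28c}$\Rightarrow$\eqref{28b}, and Theorem~\ref{16} yields \eqref{28b}$\Leftrightarrow$\eqref{28d}$\Leftrightarrow$\eqref{28g} in the free case. Hence each of \eqref{28a}, \eqref{28c}, \eqref{28d} and \eqref{28g} is equivalent to \eqref{28b}, so that, assuming any one of them, we may freely use both the geometric form \eqref{28b} (normal crossing in codimension one) and the algebraic form \eqref{28g}, namely $\J_D=\C_D$. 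It thus suffices to prove that a free divisor $D$ with smooth normalization $\wt D$ satisfying $\J_D=\C_D$ is normal crossing, that is, satisfies \eqref{28f}.

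Next I would extract the components from Piene's theorem. Since $\wt D$ is smooth and $\J_D=\C_D$, the result of Piene~\cite{Pie79} applies and forces every irreducible component of $D$ to be smooth; as $\wt D$ smooth also means each local branch of $D$ is smooth, we conclude that locally at every point $p\in D$ there is a decomposition $D=D_1\cup\cdots\cup D_k$ into smooth hypersurface germs.

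The heart of the argument is then local and geometric. Fix $p$, write $h=h_1\cdots h_k$ for a local equation with $D_i=\{h_i=0\}$ smooth, and recall $\Der(-\log D)=\bigcap_i\Der(-\log D_i)$, so that $D$ is normal crossing at $p$ exactly when the differentials $dh_1,\dots,dh_k$ are linearly independent there. By \eqref{28b} this independence holds at the generic point of every codimension-one stratum of $\Sing D$, whence the locus $\Sigma$ where $D$ fails to be normal crossing has codimension at least two in $D$. The plan is to prove $\Sigma=\emptyset$ by contradiction: assuming $p\in\Sigma$, I would analyze the singular subscheme cut out by the Jacobian ideal $\J_h$. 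Away from $\Sigma$ it is reduced of pure codimension two, being the union of the transverse pairwise intersections $D_i\cap D_j$; a failure of normal crossing at $p$---a tangency among the branches, or more than $\dim S$ of them meeting---must then manifest itself as an embedded component of the Jacobian subscheme supported at $p$, precisely as for the model $h=x^2-xyz$, whose Jacobian subscheme acquires an embedded point at the origin. Such an embedded component is incompatible with the Cohen--Macaulayness of the singular locus of a free divisor recalled in the introduction. This contradiction gives $\Sigma=\emptyset$, so $D$ is normal crossing and \eqref{28f} holds.

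The main obstacle is this last implication: that smooth branches which are transverse in codimension one but not normal crossing at $p$ force an embedded component of the Jacobian subscheme at $p$, thereby converting a geometric defect into an algebraic obstruction to Cohen--Macaulayness---this is exactly where freeness does its work. To make the step uniform in the number of branches and the type of degeneration, I expect it to be cleanest to argue through the equivalent conductor formulation $\J_D=\C_D$: one computes the conductor $\C_D$ of the configuration $D_1\cup\cdots\cup D_k$ directly from the smooth branches and compares it with the Jacobian ideal $\J_D$, checking that the two coincide only when the branches are in general position, that is, only when $D$ is normal crossing at $p$.
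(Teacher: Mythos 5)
Your reduction is sound and matches the paper: Theorems~\ref{28}, \ref{10} and \ref{16} collapse all five hypotheses to $\J_D=\C_D$ for a free divisor, and Lemma~\ref{7} (Piene's result) then yields smooth irreducible components $D_i=\{h_i=0\}$. The gap is in the endgame, which is exactly where the content of the theorem lies. Your proposed mechanism --- that a failure of normal crossing at $p$ among smooth branches that are transverse in codimension one must produce an embedded component of the Jacobian subscheme at $p$, contradicting Cohen--Macaulayness of $Z$ --- is asserted, not proved, and you acknowledge as much (``the main obstacle is this last implication''). It is also not clear that it is true as stated: failure of Cohen--Macaulayness need not come from an embedded prime (an unmixed, generically reduced subscheme of the correct dimension can still fail to be Cohen--Macaulay), and conversely nothing in your sketch rules out that the Jacobian subscheme of a degenerate configuration of smooth branches is Cohen--Macaulay; your model $h=x^2-xyz$ is not a free divisor, so it does not test the implication you actually need. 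The fallback you mention --- computing $\C_D$ for a union of smooth branches and comparing it with $\J_D$ --- is precisely the hard comparison, and note that extending both ideals to the normalization does not decide it: Piene's formula $\C_D\R_\pi=\J_D\O_{\wt D}$ together with $\R_\pi=\O_{\wt D}$ already makes them agree in $\O_{\wt D}$ once the branches are smooth, so the whole question is equality inside $\O_D$.

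The paper closes the argument quite differently, and without any stratification of the non-normal-crossing locus. From $\J_D=\C_D$ and smooth (hence normal) components one gets $\R_D=\O_{\wt D}=\bigoplus_i\O_{D_i}$, and the implication \eqref{60c}~$\Rightarrow$~\eqref{60a} of Theorem~\ref{60} upgrades this to the explicit presentation $\Omega^1(\log D)=\sum_i\O_S\frac{dh_i}{h_i}+\Omega^1_S$. This is then played against Saito's freeness criterion $\bigwedge^n\Omega^1(\log D)=\Omega^n_S(D)$: matching denominators forces the number $m$ of branches to satisfy $m\le n$, and matching numerators forces a Jacobian determinant $\frac{\p(h_1,\dots,h_m)}{\p(x_{i_1},\dots,x_{i_m})}$ to be a unit, so that $h_1,\dots,h_m$ extend to a coordinate system and $D$ is normal crossing. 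Freeness thus enters through Saito's criterion rather than through Cohen--Macaulayness of $Z$, and condition~\eqref{28b} is not used beyond the initial reduction. To complete your proof you would need to actually establish the embedded-component claim (or carry out the conductor comparison) for an arbitrary degenerate configuration of smooth branches, which your proposal does not do.
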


\begin{rmk}
The implication \eqref{28d} $\Rightarrow$ \eqref{28f} in Theorem~\ref{38} improves Theorem A in \cite{Fab12} (see Remark~\ref{9}), which is proved using \cite{Pie79} like in the proof of our main result.
Proposition~C in \cite{Fab12} is the implication \eqref{28c} $\Rightarrow$ \eqref{28f} in Theorem~\ref{38}, for the proof of which Faber uses our arguments.
\end{rmk}

As remarked above, free divisors are characterized by their singular loci being (empty or) maximal Cohen--Macaulay.
It is natural to ask when the singular locus of a divisor is Gorenstein.
This question is answered by the following theorem.

\begin{thm}\label{40}
A divisor $D$ has Gorenstein singular locus $Z$ of codimension $1$ if and only if $D$ is locally the product of a quasihomogeneous plane curve and a smooth space.
In particular, $D$ is locally quasihomogeneous and $Z$ is locally a complete intersection.
\end{thm}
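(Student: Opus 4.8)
The plan is to prove the two implications separately, the direction ``product $\Rightarrow$ Gorenstein'' being the short one. Throughout I localize at $p\in Z$ and write $n=\dim_\CC S$, so that $Z=\Sing(D)$ of codimension $1$ in $D$ means codimension $2$ in $S$, carrying its Jacobian scheme structure $\O_Z=\O_S/\J_h$.

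First I would settle the ``if'' part. Suppose $(S,D)\cong(\CC^2,C)\times(\CC^{n-2},0)$ with $C=V(f)$ a quasihomogeneous plane curve. Then $h=f(x_1,x_2)$, so $\J_h=(f,f_{x_1},f_{x_2})\O_S$ and hence $\O_Z\cong\bigl(\O_2/(f,f_{x_1},f_{x_2})\bigr)\otimes\O_{n-2}$, the Tjurina algebra of $C$ tensored with a regular factor. Quasihomogeneity gives, via the Euler relation, $f\in(f_{x_1},f_{x_2})$, so the Tjurina algebra equals the Milnor algebra $\O_2/(f_{x_1},f_{x_2})$, an Artinian complete intersection and therefore Gorenstein; since the Gorenstein property ascends and descends along the smooth factor, $Z$ is Gorenstein, and it has dimension $n-2$ because $\Sing(C)$ is a fat point.

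For the converse I would start with two standard reductions. A grade-two Gorenstein ideal of the regular local ring $\O_{S,p}$ is a complete intersection (Serre), so $\J_h=(g_1,g_2)$ for a regular sequence; this already yields the ``$Z$ is locally a complete intersection'' clause. Moreover $Z$ is then Cohen--Macaulay of pure codimension $2$, so by the freeness criterion of Aleksandrov and Terao---$D$ is free precisely when $\O_S/\J_h$ is Cohen--Macaulay---the divisor $D$ is free. The goal is now to produce the analytic splitting $(S,D)\cong(\CC^2,C)\times(\CC^{n-2},0)$, and I would obtain it from Saito's logarithmic stratification: if the logarithmic stratum through $p$ has dimension $n-2$, then by analytic triviality of $(S,D)$ along logarithmic strata \cite{Sai80} the germ splits with $C$ a plane curve, which is automatically a hypersurface in $\CC^2$ with isolated singularity. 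Thus the heart of the matter is to prove that $Z_{\mathrm{red}}$ is smooth of dimension $n-2$ and is itself a single logarithmic stratum, i.e.\ that $\Der(-\log D)$ spans $T_pZ_{\mathrm{red}}$ for every $p\in Z$. This equisingularity-and-smoothness step is where the Gorenstein hypothesis must be used decisively, to force the transverse plane-curve type to be constant along $Z_{\mathrm{red}}$ and to exclude smaller strata inside $Z$; the natural route is to control the generic transverse slice (whose Tjurina algebra is the transverse localization of the Gorenstein ring $\O_Z$) and to propagate that structure over all of $Z_{\mathrm{red}}$ using the presentation $\J_h=(g_1,g_2)$ and the freeness of $D$. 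I expect this to be the main obstacle.

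Granting the product $(S,D)\cong(\CC^2,C)\times(\CC^{n-2},0)$, it remains to see that $C$ is quasihomogeneous. As in the ``if'' part one has $\O_Z\cong T\otimes\O_{n-2}$ with $T=\O_2/(f,f_{x_1},f_{x_2})$, so descent along the smooth factor shows the Tjurina algebra $T$ is Gorenstein. The key algebraic lemma I would invoke is that the Tjurina algebra of a plane curve is Gorenstein only in the quasihomogeneous case: writing $M=\O_2/(f_{x_1},f_{x_2})$ for the Gorenstein, complete-intersection Milnor algebra and $T=M/(\bar f)$, Grothendieck--Matlis duality over $M$ identifies the canonical module of $T$ with $\Ann_M(\bar f)$, so that $T$ is Gorenstein if and only if $\Ann_M(\bar f)$ is a principal ideal; exploiting that $\bar f$ is the class of the very function cutting out the Jacobian ideal, one shows this forces $\bar f=0$, i.e.\ $f\in(f_{x_1},f_{x_2})$, whence $\mu=\tau$ and, by K.~Saito's criterion, $C$ is quasihomogeneous. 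The delicate point in this last step is precisely the passage from ``$\Ann_M(\bar f)$ principal'' to ``$\bar f=0$,'' which fails for an arbitrary element of $M$ and genuinely uses the origin of $\bar f$. Combining the three steps gives the equivalence, and the final ``in particular'' statements follow from the complete-intersection presentation together with the quasihomogeneity of the curve factor.
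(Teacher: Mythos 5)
Your proposal correctly identifies Serre's theorem (height-two Gorenstein ideals in a regular local ring are complete intersections) as the entry point, and your ``if'' direction is fine, but the two steps you flag as ``the main obstacle'' and ``the delicate point'' are exactly the two steps that constitute the proof, and you leave both open. First, the product decomposition. You propose to extract it from the logarithmic stratification by showing that $Z_{\reg}$ is an $(n-2)$-dimensional logarithmic stratum, which is essentially equivalent to what you are trying to prove and which you do not establish. The paper's route is much more direct and purely algebraic: since the preimage $\J'_D=\ideal{h,\p_1h,\dots,\p_nh}$ in $\O_S$ is a complete intersection of height $2$, Nakayama lets you choose the two minimal generators \emph{from the given generating set} $h,\p_1h,\dots,\p_nh$; hence $dh(\Theta_S)$ fails to minimally generate $\J_D$ as soon as $n>2$, and Saito's triviality lemma \cite[(3.5)]{Sai80} then splits off a smooth factor $(\CC^{n-2},0)$, leaving a plane curve $C$ (or a smooth $D$). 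No equisingularity along $Z$, no constancy of the transverse type, and no control of substrata is needed.

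Second, the quasihomogeneity of $C$. Your reduction via Matlis duality over the Milnor algebra $M$ to the implication ``$\Ann_M(\bar f)$ principal $\Rightarrow\bar f=0$'' is the entire content of the step, and as you yourself note it is false for an arbitrary element of $M$ (take $\bar f$ in the socle: then $\Ann_M(\bar f)=\mm_M$, which is not principal for an actual plane curve singularity, but you give no argument that rules out intermediate cases). The paper closes this either by quoting Kunz--Waldi \cite[Satz~2]{KW84} (a Gorenstein algebroid curve has Gorenstein singular locus iff it is quasihomogeneous), or by a different mechanism entirely: for free $D$ the canonical module of $Z$ is $\omega_Z=\R_D/\O_D$ (Proposition~\ref{20}), so $Z$ Gorenstein forces $\R_D$ to be generated by $1$ and one further element; since $\R_D$ is cyclic only for smooth $D$ (Proposition~\ref{56}), the class of $1$ is then part of a minimal generating set, which is equivalent to $\frac{dh}h$ being part of a basis of $\Omega^1(\log D)$, i.e.\ to Euler homogeneity, and Saito's criterion for isolated singularities converts this into quasihomogeneity of $C$. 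Without supplying one of these two closing arguments (or a correct proof of your annihilator claim), your proof of the ``only if'' direction is incomplete at both of its essential junctures.
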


\begin{rmk}
Theorem~\ref{40} complements a result of Kunz--Waldi \cite[Satz~2]{KW84} saying that a Gorenstein algebroid curve has Gorenstein singular locus if and only if it is quasihomogeneous.
\end{rmk}

\section{Logarithmic modules and fractional ideals}\label{30}

In this section, we review Saito's logarithmic modules, the relation of freeness and Cohen--Macaulayness of the Jacobian ideal, and the duality of maximal Cohen--Macaulay fractional ideals.
We switch to a local setup for the remainder of the article.
 
Let $D$ be a reduced effective divisor defined by $\I_D=\O_S\cdot h$ in the smooth complex analytic space germ $S=(\CC^n,0)$.
Denote by $h\colon S\to T=(\CC,0)$ a function germ generating the ideal $\I_D=\O_S\cdot h$ of $D$.
We abbreviate by
\[
\Theta_S:=\Der_\CC(\O_S)=\Hom_{\O_S}(\Omega^1_S,\O_S)
\]
the $\O_S$-module of vector fields on $S$.
Recall Saito's definition \cite[\S1]{Sai80} of the $\O_S$-modules of logarithmic differential forms and of logarithmic vector fields.

\begin{dfn}[Saito]\label{33}
We set
\begin{align*}
\Omega^p(\log D)&:=\{\omega\in\Omega^p_S(D)\mid d\omega\in\Omega_S^{p+1}(D)\}\\
\Der(-\log D)&:=\{\delta\in\Theta_S\mid dh(\delta)\in\I_D\}
\end{align*}
\end{dfn}

These modules are stalks of analogously defined coherent sheaves of $\O_S$-modules (see \cite[(1.3),(1.5)]{Sai80}).
It is obvious that each of these sheaves $\L$ is torsion free and normal, and hence reflexive (see \cite[Prop.~1.6]{Har80}).
More precisely, $\Omega^1(\log D)$ and $\Der(-\log D)$ are mutually $\O_S$-dual (see \cite[(1.6)]{Sai80}).
Normality of a sheaf $\L$ means that $\L=i_*i^*\L$ where $i\colon S\setminus Z\into S$ denotes the inclusion of the complement of the singular locus of $D$.
In the case of $\L=\Der(-\log D)$, this means that $\delta\in\Der(-\log D)$ if and only if $\delta$ is tangent to $D$ at all smooth points.
In addition, $\Omega^\bullet(\log D)$ is an exterior algebra over $\O_S$ closed under exterior differentiation and $\Der(-\log D)$ is closed under the Lie bracket.

\begin{dfn}
A divisor $D$ is called free if $\Der(-\log D)$ is a free $\O_S$-module.
\end{dfn}

The definition of $\Der(-\log D)$ can be rephrased as a short exact sequence of $\O_S$-modules
\begin{equation}\label{1}
\xymat{
0&\J_D\ar[l]&\Theta_S\ar[l]_-{dh}&\Der(-\log D)\ar[l]&0\ar[l]
}
\end{equation}
where the Jacobian ideal $\J_D$ of $D$ is defined as the Fitting ideal 
\[
\J_D:=\F^{n-1}_{\O_D}(\Omega_D^1)=\ideal{\frac{\p h}{\p x_1},\dots,\frac{\p h}{\p x_n}}\subset{\O_D}.
\]
Note that $\J_D$ is an ideal in $\O_D$ and pulls back to $\ideal{h,\frac{\p h}{\p x_1},\dots,\frac{\p h}{\p x_n}}$ in $\O_S$.
We shall consider the singular locus $Z$ of $D$ equipped with the structure defined by $\J_D$, that is,
\begin{equation}\label{21}
\O_Z:=\O_D/\J_D.
\end{equation}
Note that $Z$ might be non-reduced.
There is the following intrinsic characterization of free divisors in terms of their singular locus (see \cite[\S1 Thm.]{Ale88} or \cite[Prop.~2.4]{Ter80a}).

\begin{thm}\label{19}
The following are equivalent:
\begin{enumerate}
\item\label{19a} $D$ is a free divisor;
\item\label{19b} $\J_D$ is a maximal Cohen--Macaulay $\O_D$-module;
\item\label{19c} $D$ is smooth or $Z$ is Cohen--Macaulay of codimension one.
\end{enumerate}
\end{thm}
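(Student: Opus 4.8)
The plan is to translate all three conditions into statements about depth and to connect them through the two short exact sequences at hand. First I would treat \eqref{19a}$\Leftrightarrow$\eqref{19b} using sequence~\eqref{1}. Since $\Theta_S$ is free over the regular local ring $\O_S$, that sequence exhibits $\Der(-\log D)$ as a first syzygy of $\J_D$ as $\O_S$-modules. Because $\J_D$ is annihilated by $h$ it is a torsion, hence non-free, $\O_S$-module, so $\pdim_{\O_S}\J_D\ge1$ and therefore $\pdim_{\O_S}\Der(-\log D)=\pdim_{\O_S}\J_D-1$. Thus $D$ is free, i.e. $\pdim_{\O_S}\Der(-\log D)=0$, precisely when $\pdim_{\O_S}\J_D=1$. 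By the Auslander--Buchsbaum formula over $\O_S$ this reads $\depth_{\O_S}\J_D=n-1$; and since $\J_D$ is an $\O_D$-module and depth is unchanged under restriction of scalars along $\O_S\onto\O_D$, this is exactly $\depth_{\O_D}\J_D=n-1=\dim\O_D$, the statement that $\J_D$ is maximal Cohen--Macaulay.

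Next I would establish \eqref{19b}$\Leftrightarrow$\eqref{19c} using the defining sequence $0\to\J_D\to\O_D\to\O_Z\to0$, in which $\O_D$ is Cohen--Macaulay of dimension $n-1$ because it is a hypersurface. If $D$ is smooth then $Z=\emptyset$, $\J_D=\O_D$ is trivially maximal Cohen--Macaulay, and \eqref{19c} holds; so assume $D$ singular. As $D$ is reduced it is generically smooth, whence $\dim\O_Z\le n-2$, while $\J_D$ has full support on $D$, so $\dim\J_D=n-1$. Now I apply the standard depth estimates for a short exact sequence. If $\J_D$ is maximal Cohen--Macaulay, then $\depth\O_Z\ge\min(\depth\J_D-1,\depth\O_D)=n-2$; combined with $\depth\O_Z\le\dim\O_Z\le n-2$ this forces $\depth\O_Z=\dim\O_Z=n-2$, so $Z$ is Cohen--Macaulay of codimension $1$. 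Conversely, if $Z$ is Cohen--Macaulay of codimension $1$, then $\depth\O_Z=n-2$ gives $\depth\J_D\ge\min(\depth\O_D,\depth\O_Z+1)=n-1$, and since $\depth\J_D\le\dim\J_D=n-1$ we conclude $\J_D$ is maximal Cohen--Macaulay.

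The individual computations are routine homological algebra, so I do not expect a deep obstacle; the care lies in three bookkeeping points. First, one must verify the torsion argument $\pdim_{\O_S}\J_D\ge1$, which is what makes the syzygy relation drop the projective dimension by exactly one rather than leaving $\Der(-\log D)$ accidentally free. Second, one must justify the invariance of depth under the change of rings from $\O_S$ to $\O_D$, so that the $\O_S$-depth produced by Auslander--Buchsbaum matches the maximal Cohen--Macaulay condition over $\O_D$. Third, the reducedness of $D$ is essential to pin down the codimension: it yields $\dim Z\le n-2$, without which the depth estimate would only deliver an inequality. Assembling these, together with the clean separation of the smooth case, completes the argument.
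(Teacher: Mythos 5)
Your proof is correct, and it follows the same two-step skeleton as the paper --- sequence \eqref{1} plus Auslander--Buchsbaum for \eqref{19a}$\Leftrightarrow$\eqref{19b}, and the sequence $0\to\J_D\to\O_D\to\O_Z\to0$ for \eqref{19b}$\Leftrightarrow$\eqref{19c} --- but you fill in both steps differently. For the first equivalence the paper invokes Saito's triviality lemma to reduce to the case where \eqref{1} is a \emph{minimal} resolution of $\J_D$ over $\O_S$ and then reads off the projective dimension; you instead observe that $\J_D$ is $\O_S$-torsion, hence $\pdim_{\O_S}\J_D\ge1$, so the syzygy shift $\pdim_{\O_S}\Der(-\log D)=\pdim_{\O_S}\J_D-1$ holds without any minimality assumption. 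That is a legitimate simplification: the general syzygy fact (for $0\to M\to F\to N\to0$ with $F$ free and $N$ not free, $\pdim M=\pdim N-1$) makes the reduction to a product $D'\times(\CC^k,0)$ unnecessary for this purpose. For the second equivalence the paper cites Herzog--Kunz (Satz~4.13), after noting via Lemma~\ref{25} that $\J_D$ has height at least $1$; you give a direct depth-chase with the standard inequalities for a short exact sequence, using reducedness of $D$ to get $\dim\O_Z\le n-2$ and the nonzerodivisor in $\J_D$ to get $\dim\J_D=n-1$. Your version is self-contained where the paper outsources to two external results; the paper's version is shorter and, via the triviality lemma, also records the structural fact that a non-minimal generation of $\J_D$ by the partials forces $D$ to split off a smooth factor, which is reused elsewhere (e.g.\ in the proof of Theorem~\ref{40}). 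All the bookkeeping points you flag (torsionness of $\J_D$, invariance of depth under $\O_S\onto\O_D$, reducedness pinning down the codimension) are handled correctly.
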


\begin{proof}
If $dh(\Theta_S)$ does not minimally generate $\J_D$, then $D\cong D'\times(\CC^k,0)$, $k>0$, by the triviality lemma~\cite[(3.5)]{Sai80}.
By replacing $D$ by $D'$, we may therefore assume that \eqref{1} is a minimal resolution of $\J_D$ as $\O_S$-module.
Thus, the equivalence of \eqref{19a} and \eqref{19b} is due to the Auslander--Buchsbaum formula.
By Lemma~\ref{25} below, $\J_D$ has height at least one and the implication~\eqref{19b} $\Leftrightarrow$ \eqref{19c} is proved in \cite[Satz~4.13]{HK71}.
\end{proof}

\begin{cor}\label{27}
Any $D$ is free in codimension one.
\end{cor}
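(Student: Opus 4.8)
The plan is to deduce the corollary from the reflexivity of $\Der(-\log D)$ recorded above, together with the classical fact that a reflexive module over a regular local ring of dimension at most $2$ is free. First I would make the phrase ``free in codimension $1$'' precise. The locus $N$ of points of $D$ at which $D$ fails to be free is contained in the singular locus $Z$, since $\Der(-\log D)=\Theta_S$ off $D$ and is free at the smooth points of $D$ as well; moreover $N$ is a closed analytic subset (the non-free locus of a coherent sheaf is closed, by upper semicontinuity of the minimal number of generators). Because $\dim\O_S=n$ while $\dim\O_D=n-1$, and because the generic rank of $\Der(-\log D)$ is $n$, the assertion $\codim_D N\ge 2$ is equivalent to showing that every prime $\pp$ of $\O_S$ at which $\Der(-\log D)_\pp$ is not free has $\operatorname{ht}\pp\ge 3$.

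For the main step, fix a prime $\pp$ of $\O_S$ with $\operatorname{ht}\pp\le 2$. The localization $\O_{S,\pp}$ is a regular local ring of dimension at most $2$, and $\Der(-\log D)_\pp$ is again reflexive over it, since reflexivity of a finitely generated module over a Noetherian ring is preserved under localization. It then suffices to recall that a reflexive module $M$ over a regular local ring $R$ with $\dim R\le 2$ is free: reflexivity makes $M$ a dual module, hence it satisfies Serre's condition $S_2$, so $\depth M\ge\min(2,\dim R)=\dim R$ and $M$ is maximal Cohen--Macaulay; the Auslander--Buchsbaum formula then yields $\pdim M=0$. Consequently $\Der(-\log D)_\pp$ is free for every such $\pp$, so every non-free prime has height at least $3$ in $\O_S$. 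As these primes contain $h$, their codimension in $D$ is at least $2$, which is exactly the height bound sought in the first paragraph.

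Combining the two steps, $N$ is a closed analytic subset of $D$ of codimension at least $2$, so $D$ is free on the complement of an analytic set of codimension at least $2$ in $D$; this is precisely the statement that $D$ is free in codimension $1$.

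There is no genuine obstacle here; the only points requiring care are the bookkeeping translation between ``codimension $1$ in $D$'' and ``height $3$ in $\O_S$'', and the verification that reflexivity descends to the height-$\le 2$ localizations so that the two-dimensional regular case applies. As an alternative route I could argue through Theorem~\ref{19} directly: at the generic point of any codimension-$1$ component of $Z$, the localization of $\O_Z=\O_D/\J_D$ is Artinian, hence trivially Cohen--Macaulay of codimension $1$, so condition~\eqref{19c} holds and $D$ is free there; the non-free locus then omits every codimension-$1$ component of $Z$ and again lies in codimension at least $2$ in $D$. I would nonetheless prefer the reflexivity argument, as it is uniform and requires no case analysis of the components of $Z$.
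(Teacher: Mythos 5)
Your proof is correct, and it takes a genuinely different route from the paper's. The paper argues through Theorem~\ref{19}: the non-free locus is identified with the set of points $z\in Z$ where $\depth\O_{Z,z}<n-2$, and Scheja's semicontinuity theorem for depth is then invoked to bound the codimension of that set. You instead work directly with the sheaf $\Der(-\log D)$, using its reflexivity (already recorded in Section~\ref{30}) together with the classical fact that a reflexive module over a regular local ring of dimension at most $2$ is free; localizing at the height-$\le 2$ primes of $\O_S$ then forces the non-free locus to have codimension at least $3$ in $S$, hence at least $2$ in $D$. Your depth computation ($M$ reflexive $\Rightarrow$ $M$ is a dual $\Rightarrow$ $\depth M\ge\min(2,\depth R)$, then Auslander--Buchsbaum) is standard and the bookkeeping between height in $\O_S$ and codimension in $D$ is handled correctly, so the argument is sound and arguably more self-contained, avoiding both the freeness criterion and the citation of Scheja. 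What the paper's route buys is coherence with the rest of the article, where the structure of $Z=V(\J_D)$ is the central object; your secondary remark, applying Theorem~\ref{19} at generic points of the codimension-$1$ components of $Z$, is close in spirit to what the paper actually does, except that the paper packages the genericity via Scheja's theorem rather than via a case analysis of components.
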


\begin{proof}
By Theorem~\ref{19}, the non-free locus of $D$ is contained in $Z$ and equals
\[
\{z\in Z\mid\depth\O_{Z,z}<n-2\}\subset D.
\]
By Scheja~\cite[Satz~5]{Sch64}, this is an analytic set of codimension at least two in $D$.
\end{proof}

We denote by $Q(-)$ the total quotient ring.
Then $\M_D:=Q(\O_D)$ is the ring of meromorphic functions on $D$.

\begin{dfn}
A fractional ideal (on $D$) is a finite $\O_D$-submodule of $\M_D$ which contains a non-zero divisor.
\end{dfn}

\begin{lem}\label{25}
$\J_D$ is a fractional ideal.
\end{lem}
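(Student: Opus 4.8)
The plan is to check the three requirements in the definition of a fractional ideal. Two of them are immediate: $\J_D$ is finite over $\O_D$, being generated by the $n$ partial derivatives $\p h/\p x_1,\dots,\p h/\p x_n$, and it is a submodule of $\M_D=Q(\O_D)$ since it is an ordinary ideal of $\O_D$. The substance of the lemma is therefore to produce a non-zero divisor of $\O_D$ inside $\J_D$.

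To do so, recall that $\O_D=\O_S/\O_S\cdot h$ is a hypersurface in the regular local ring $\O_S$, hence Cohen--Macaulay and unmixed; thus the associated primes of $\O_D$ are exactly its finitely many minimal primes $\pp_1,\dots,\pp_r$, which correspond to the irreducible components $D_1,\dots,D_r$ of $D$. In a Noetherian ring the zero divisors form precisely the union of the associated primes, so it suffices to show $\J_D\not\subset\pp_i$ for each $i$; prime avoidance over the finite family $\{\pp_i\}$ then yields an element of $\J_D$ contained in none of the $\pp_i$, that is, a non-zero divisor.

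The inclusion $\J_D\subset\pp_i$ would force all partials $\p h/\p x_k$ to vanish along $D_i$, i.e.\ $D_i\subset Z=\Sing D$. Ruling this out is where reducedness and characteristic $0$ enter, and it is the step I expect to be the main obstacle: I would invoke generic smoothness of a reduced divisor. Concretely, since $h$ is reduced, the component $D_i$ is cut out near its generic point by a squarefree (indeed irreducible) factor $h_i$ of $h$, and over $\CC$ such an $h_i$ cannot have all of its partials divisible by $h_i$; equivalently, the function field of $D_i$ is separable of transcendence degree $n-1$ over $\CC$, so $\Omega^1_D$ is free of rank $n-1$ at the generic point $\pp_i$ and the Fitting ideal $\J_D=\F^{n-1}_{\O_D}(\Omega^1_D)$ localizes to the unit ideal there. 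Hence $D_i$ has smooth points, $D_i\not\subset Z$, and $\J_D\not\subset\pp_i$ for every $i$. Granting this, the lemma follows; in particular $\J_D$ has height at least $1$, as is used in the proof of Theorem~\ref{19}.
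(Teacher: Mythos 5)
Your proof is correct and follows essentially the same route as the paper: both reduce the claim to showing that $\J_D$ avoids every (minimal) associated prime of $\O_D$ and then apply prime avoidance, the key input being that reducedness of $D$ forces the singular locus to have positive codimension (the paper cites Serre's condition $R_0$ where you spell out generic smoothness of a reduced divisor over $\CC$). The extra detail you give on unmixedness of hypersurfaces and on the Fitting ideal at generic points is a correct elaboration of what the paper leaves implicit.
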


\begin{proof}
By assumption $D$ is reduced, so $\O_D$ satisfies Serre's condition $(R_0)$.
This means that $Z\subset D$ has codimension at least one.
In other words, $\J_D\not\subseteq\pp$ for all $\pp\in\Ass(\O_D)$ where the latter denotes the set of (minimal) associated primes of $\O_D$.
By prime avoidance, $\J_D\not\subseteq\bigcup_{\pp\in\Ass\O_D}\pp$.
But the latter is the set of zero divisors of $\O_D$ and the claim follows.
\end{proof}

\begin{prp}\label{18}
The $\O_D$-dual of any fractional ideal $\I$ is again a fractional ideal $\I^\vee=\{f\in\M_D\mid f\cdot\I\subseteq\O_D\}$.
The duality functor 
\[
-^\vee=\Hom_{\O_D}(-,\O_D)
\]
reverses inclusions. 
It is an involution on the class of maximal Cohen--Macaulay fractional ideals.
\end{prp}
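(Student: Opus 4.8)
The plan is to treat the three assertions in turn, the first two being essentially formal and the third resting on the regularity of $\O_S$. First I would record the explicit description of the dual. Since $\I$ is a finite $\O_D$-submodule of $\M_D$ containing a non-zero divisor $g$, the localized module $\M_D\otimes_{\O_D}\I$ is just $\M_D$, so by flat base change every $\varphi\in\Hom_{\O_D}(\I,\O_D)$ extends $\M_D$-linearly to multiplication by the element $f:=\varphi(g)/g\in\M_D$, and conversely multiplication by any $f$ with $f\cdot\I\subseteq\O_D$ is $\O_D$-linear. This will identify $\I^\vee=\Hom_{\O_D}(\I,\O_D)$ with $\{f\in\M_D\mid f\cdot\I\subseteq\O_D\}$. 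To see that $\I^\vee$ is again a fractional ideal, I would note that $\I^\vee\subseteq g^{-1}\O_D\cong\O_D$ is finite because $\O_D$ is Noetherian, and that clearing the denominators of a finite generating set of $\I$ yields a non-zero divisor $b$ with $b\cdot\I\subseteq\O_D$, so that $b\in\I^\vee$.

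The second assertion is immediate from the colon description: if $\I\subseteq\J$, then $f\cdot\J\subseteq\O_D$ forces $f\cdot\I\subseteq\O_D$, hence $\J^\vee\subseteq\I^\vee$. The evaluation morphism $\I\to\I^{\vee\vee}$ becomes, under the colon description, the inclusion $\I\subseteq\{g\in\M_D\mid g\cdot\I^\vee\subseteq\O_D\}=\I^{\vee\vee}$ inside $\M_D$, so it always holds that $\I\subseteq\I^{\vee\vee}$. It therefore remains to show that $-^\vee$ sends maximal Cohen--Macaulay fractional ideals to maximal Cohen--Macaulay fractional ideals and that the inclusion $\I\subseteq\I^{\vee\vee}$ is an equality for such $\I$.

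Here I would exploit that $\O_S$ is regular and $\O_D=\O_S/\O_S h$ with $h$ a non-zero divisor, so $\O_D$ is Cohen--Macaulay of dimension $n-1$. Since depth is intrinsic, a fractional ideal $\I$ is maximal Cohen--Macaulay over $\O_D$ exactly when $\depth_{\O_S}\I=n-1$, which by the Auslander--Buchsbaum formula over the regular ring $\O_S$ means $\pdim_{\O_S}\I=1$. Dualizing the exact sequence $0\to\O_S\xrightarrow{\ h\ }\O_S\to\O_D\to0$ and using that $\I$ is an $\O_D$-module (whence $\Hom_{\O_S}(\I,\O_S)=0$) gives a natural isomorphism $\I^\vee\cong\Ext^1_{\O_S}(\I,\O_S)$. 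Starting from a finite free resolution $0\to F_1\xrightarrow{\ \phi\ }F_0\to\I\to0$ over $\O_S$, applying $\Hom_{\O_S}(-,\O_S)$ produces the length-one resolution $0\to F_0^*\xrightarrow{\phi^*}F_1^*\to\I^\vee\to0$; thus $\pdim_{\O_S}\I^\vee\le1$ and $\I^\vee$ is again maximal Cohen--Macaulay. Dualizing once more recovers $0\to F_1\xrightarrow{\phi}F_0\to\I^{\vee\vee}\to0$, so that $\I^{\vee\vee}\cong\I$ as $\O_D$-modules.

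I expect the main obstacle to be this last biduality step: one must ensure that the abstract isomorphism $\I^{\vee\vee}\cong\I$ obtained by dualizing the resolution twice genuinely is the canonical evaluation morphism, equivalently the inclusion $\I\subseteq\I^{\vee\vee}$ in $\M_D$, and not merely some isomorphism of $\O_D$-modules (a bare abstract isomorphism between a fractional ideal and a larger one need not be an equality). This is where I would invoke the naturality of the identification $\I^\vee\cong\Ext^1_{\O_S}(\I,\O_S)$ together with the standard naturality of biduality for the Auslander transpose, which intertwines the two biduality maps and forces $\I\subseteq\I^{\vee\vee}$ to be an equality. The remaining points—finiteness of $\I^\vee$, the existence of a non-zero divisor in it, and the depth versus projective-dimension bookkeeping—are routine once the passage to the regular ambient ring $\O_S$ has been set up.
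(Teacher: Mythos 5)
Your proof is correct in substance, but be aware that the paper does not actually prove this proposition: it simply cites \cite[Prop.~(1.7)]{JS90a}, so any worked argument is necessarily your own route. The first two assertions are handled completely and correctly: the colon description of $\I^\vee$ via flat base change to $\M_D$, finiteness of $\I^\vee\subseteq g^{-1}\O_D$, the common denominator of a generating set producing a non-zero divisor in $\I^\vee$, and the reversal of inclusions are all routine and your treatment is fine. For the third assertion, the reduction to the ambient regular ring is sound: $\I$ is maximal Cohen--Macaulay over $\O_D$ iff $\pdim_{\O_S}\I=1$ by Auslander--Buchsbaum (together with the observation that a nonzero $\O_D$-module is never $\O_S$-free), the vanishing $\Hom_{\O_S}(\I,\O_S)=0$ identifies $\I^\vee$ with $\Ext^1_{\O_S}(\I,\O_S)$ exactly as in the paper's change-of-rings isomorphism \eqref{43}, and dualizing a length-one free resolution shows $\I^\vee$ is again maximal Cohen--Macaulay. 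You correctly isolate the one delicate point: the abstract isomorphism $\I\cong\I^{\vee\vee}$ obtained by dualizing the resolution twice must be shown to coincide with the evaluation map, equivalently with the inclusion $\I\subseteq\I^{\vee\vee}$ inside $\M_D$ -- an abstract isomorphism alone proves nothing, as $x\O_D\cong\O_D$ for a non-zero divisor $x$ illustrates. Your proposed fix via naturality of \eqref{43} and of derived biduality for perfect complexes does work, but as written it remains a gesture rather than an argument. If you want to avoid spelling out that naturality, the cleaner standard route is to use that $\O_D$ is Gorenstein (being a hypersurface ring) and invoke \cite[Thm.~3.3.10]{BH93}: for a maximal Cohen--Macaulay module $M$ over a Cohen--Macaulay local ring with canonical module $\omega$ (here $\omega=\O_D$), the dual $\Hom(M,\omega)$ is again maximal Cohen--Macaulay and the canonical evaluation map $M\to\Hom(\Hom(M,\omega),\omega)$ is an isomorphism; that is essentially the content of the result the paper cites.
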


\begin{proof}
See \cite[Prop.~(1.7)]{JS90a}.
\end{proof}

For lack of direct reference, we record the following consequence of the Evans--Griffith theorem.

\begin{lem}\label{39}
Let $R$ be a reduced Noetherian ring which satisfies Serre's condition $(S_2)$ and which is Gorenstein in codimension up to one.
If its normalization $\wt R$ is a finite $R$-module then it is reflexive.
\end{lem}

\begin{proof}
Let $C:=\Hom_R(\wt R,R)$ denote the conductor.
By finiteness of $\tilde R$, $\End_R(C)\subseteq\wt R$ and equality holds since $C$ is an $\wt R$-ideal.
By \cite[Exc.~1.4.19]{BH93}, with $R$ also $C$ and hence $\wt R$ satisfies $(S_2)$.
Indeed, for any $\pp\in\Spec R$,
\begin{align*}
\depth\wt R_\pp&=\depth\End_{R_\pp}(C_\pp)\\
\nonumber&\ge\min\{2,\depth C_\pp\}=\min\{2,\depth\Hom_{R_\pp}(\wt R_\pp,R_\pp)\}\\
\nonumber&\ge\min\{2,\depth R_\pp\}.
\end{align*}
By \cite[Thm.~3.6]{EG85}, this means that $\wt R$ is a reflexive $R$-module as claimed.
\end{proof}

\section{Logarithmic residues and duality}\label{13}

In this section, we develop the dual picture of Saito's residue map and apply it to find inclusion relations of certain natural fractional ideals and their duals.

Let $\pi\colon\wt D\to D$ denote the normalization of $D$.
Then $\M_D=\M_{\wt D}:=Q(\O_{\wt D})$ and $\O_{\wt D}$ is the ring of weakly holomorphic functions on $D$ (see \cite[Exc.~4.4.16.(3), Thm.~4.4.15]{JP00}).
Let
\[
\xymat{
\Omega^p(\log D)\ar[r]^-{\rho^p_D}&\Omega_D^{p-1}\otimes_{\O_D}\M_D
}
\]
be Saito's residue map \cite[\S2]{Sai80} which is defined as follows:
by \cite[(1.1)]{Sai80}, any $\omega\in\Omega^p(\log D)$ can be written as
\begin{equation}\label{4}
\omega=\frac{dh}{h}\wedge\frac{\xi}{g}+\frac{\eta}{g},
\end{equation}
for some $\xi\in\Omega_S^{p-1}$, $\eta\in\Omega^p_S$, and $g\in\O_S$, which restricts to a non-zero divisor in $\O_D$.
Then 
\begin{equation}\label{34}
\rho_D^p(\omega):=\frac{\xi}{g}\vert_D
\end{equation}
is well defined by \cite[(2.4)]{Sai80}.
We shall abbreviate $\rho_D:=\rho_D^1$ and denote its image by 
\[
\R_D:=\rho_D(\Omega^1(\log D)).
\]
Using this notation, condition~\eqref{28c} in Theorem~\ref{28}, that the residue of any $\omega\in\Omega^1(\log D)$ is weakly holomorphic, can be written as $\O_{\wt D}=\R_D$.

The following result of Saito~\cite[(2.9)]{Sai80} can be considered as a kind of approximation of our main result Theorem~\ref{10}; in fact, it shall be used in its proof.
Combined with his freeness criterion~\cite[(1.8)]{Sai80} it yields Faber's characterization of normal crossing divisors in Proposition~B of \cite{Fab12}.

\begin{thm}\label{60}
Let $D_i=\{h_i=0\}$, $i=1,\dots,k$, denote irreducible components of $D$.
Then the following conditions are equivalent:
\begin{enumerate}
\item\label{60a} $\Omega^1(\log D)=\sum_{i=1}^k\O_S\frac{dh_i}{h_i}+\Omega^1_S$;
\item\label{60b} $\Omega^1(\log D)$ is generated by closed forms;
\item\label{60d} the $D_i$ are normal and intersect transversally in codimension one;
\item\label{60c} $\R_D=\bigoplus_{i=1}^k\O_{D_i}$.
\end{enumerate}
\end{thm}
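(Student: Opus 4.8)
The plan is to establish the cycle \eqref{60a} $\Rightarrow$ \eqref{60b} $\Rightarrow$ \eqref{60c} $\Rightarrow$ \eqref{60a} by pure residue calculus, and then to match this list with the geometric condition \eqref{60d} by reduction to codimension one. Throughout I would use three facts about $\rho_D$ that are available from \cite[\S2]{Sai80}: it is $\O_D$-linear; it fits into an exact sequence $0\to\Omega^1_S\to\Omega^1(\log D)\to\R_D\to0$ whose surjection is $\rho_D$, so that $\ker\rho_D=\Omega^1_S$; and it commutes with the differential. In addition, each $\frac{dh_i}{h_i}$ lies in $\Omega^1(\log D)$ and has residue the idempotent $e_i\in\O_{\wt D}=\prod_i\O_{\wt D_i}$ that is $1$ on $\wt D_i$ and $0$ on the other branches. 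Since $\sum_i e_i=\rho_D(\frac{dh}{h})=1$ and $\rho_D$ is $\O_D$-linear, this already gives the inclusion $\bigoplus_i\O_{D_i}=\sum_i\O_D\,e_i\subseteq\R_D$ for \emph{every} $D$.

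First, \eqref{60a} $\Rightarrow$ \eqref{60b} is immediate, since the $\frac{dh_i}{h_i}$ are closed and $\Omega^1_S$ is generated by the closed forms $dx_1,\dots,dx_n$. For \eqref{60b} $\Rightarrow$ \eqref{60c} I would note that a closed $\omega$ satisfies $d(\rho_D(\omega))=\rho_D^2(d\omega)=0$ on the \emph{smooth} space $\wt D$, so $\rho_D(\omega)$ is constant on each connected germ $\wt D_i$, i.e.\ $\rho_D(\omega)\in\bigoplus_i\CC\,e_i$. Generating $\Omega^1(\log D)$ by closed forms and using $\O_D$-linearity then yields $\R_D\subseteq\sum_i\O_D\,e_i=\bigoplus_i\O_{D_i}$, and the reverse inclusion is the remark above. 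For \eqref{60c} $\Rightarrow$ \eqref{60a}, given $\omega\in\Omega^1(\log D)$ I would write $\rho_D(\omega)=\sum_i f_i e_i$ with $f_i\in\O_{D_i}$, lift each $f_i$ to $\tilde f_i\in\O_S$, and subtract $\sum_i\tilde f_i\frac{dh_i}{h_i}$; the difference has vanishing residue, hence lies in $\ker\rho_D=\Omega^1_S$. This proves $\Omega^1(\log D)\subseteq\sum_i\O_S\frac{dh_i}{h_i}+\Omega^1_S$, and the reverse inclusion is clear, closing the cycle.

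It remains to insert \eqref{60d}. For \eqref{60c} $\Rightarrow$ \eqref{60d} I would first use that weakly holomorphic functions are residues, so $\O_{\wt D}\subseteq\R_D$; combined with \eqref{60c} this forces $\bigoplus_i\O_{\wt D_i}\subseteq\bigoplus_i\O_{D_i}$, hence $\O_{D_i}=\O_{\wt D_i}$, so each $D_i$ is normal and in particular smooth in codimension one. The transversality of the branches I would then read off at a generic point of each codimension-one component of $Z$, where $(S,D)$ is a product of a smooth factor with a reduced plane curve germ $C$ having smooth branches; there \eqref{60c} becomes $\R_C=\O_{\wt C}$, which by Saito's plane-curve case \cite[(2.11)]{Sai80} forces $C$ to be normal crossing (equivalently, a direct local computation shows that a tangency of two branches, or three branches through the point, produces a residue that is not weakly holomorphic). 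Conversely, for \eqref{60d} $\Rightarrow$ \eqref{60c}, normality of the components gives $\bigoplus_i\O_{D_i}=\O_{\wt D}$, while the normal crossing structure gives $\R_D=\O_{\wt D}$ at the generic points of $Z$; to globalize I would use that $\O_{\wt D}$ is $S_2$ over $\O_D$ (finite extension preserving depth, \cite[IV-18]{Ser65}), so that with $i\colon D\setminus Z\into D$ the torsion-free $\R_D$ embeds into $i_*i^*\R_D=i_*i^*\O_{\wt D}=\O_{\wt D}$, whence $\R_D\subseteq\O_{\wt D}$; together with $\O_{\wt D}\subseteq\R_D$ this gives \eqref{60c}.

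The hard part is the transversality step in \eqref{60c} $\Rightarrow$ \eqref{60d}: the reduction to a plane curve and the verification that $\R_C=\O_{\wt C}$ singles out the node among all configurations of smooth branches. Everything else is formal residue calculus together with the depth/normality bookkeeping; but this local computation is exactly where the geometry of ``normal crossing'' genuinely enters, and it is precisely Saito's plane-curve analysis, which one must either reproduce by hand or import as the base case.
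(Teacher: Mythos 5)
The paper itself does not reprove this statement (it is quoted from \cite[(2.9)]{Sai80}); what it does supply is the assertion that the substance of the implication \eqref{60c} $\Rightarrow$ \eqref{60d} is the pair of residue computations in Example~\ref{48}\,\eqref{48b},\eqref{48c}. Your formal cycle \eqref{60a} $\Rightarrow$ \eqref{60b} $\Rightarrow$ \eqref{60c} $\Rightarrow$ \eqref{60a} is correct and well organized. The genuine gap is in your primary route for the transversality part of \eqref{60c} $\Rightarrow$ \eqref{60d}: you claim that at a generic point of a codimension-one component of $Z$ the pair $(S,D)$ is a product of a smooth factor with a plane curve germ, and then invoke Saito's plane-curve case. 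That product structure fails in general: four or more smooth branches through a common codimension-two germ need not be simultaneously trivializable along it, because the cross-ratio of their tangent hyperplanes can vary along $Z$; the divisor $xy(x+y)(x+yz)=0$ of Remark~\ref{49b} is exactly such an example, and avoiding this reduction is the whole point of the paper's approach to Theorem~\ref{10}. The correct argument is the one you relegate to a parenthesis: one never normalizes the whole configuration, only two or three branches at a time. A tangent pair can always be brought to $\{x=0\}\cup\{x+y^m=0\}$, $m\ge2$, at a generic point, and any three pairwise transverse concurrent branches to $\{x=0\}\cup\{y=0\}\cup\{x-y=0\}$; since $\Omega^1(\log D')\subseteq\Omega^1(\log D)$ for $D'\subseteq D$, the explicit forms of Example~\ref{48}\,\eqref{48b},\eqref{48c} then produce residues with poles, contradicting \eqref{60c}. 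So promote the parenthetical computation to the argument and delete the product reduction.

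There is a second, smaller defect in \eqref{60d} $\Rightarrow$ \eqref{60c}: the globalization with $i\colon D\setminus Z\into D$ does not close, because $\pi^{-1}(Z)$ has codimension \emph{one} in $\wt D$ and hence $i_*i^*\O_{\wt D}\ne\O_{\wt D}$; already for a node, $\M_D\cap i_*i^*\O_{\wt D}$ contains $(t^{-1},0)\notin\O_{\wt D}$, so the chain $\R_D\subseteq i_*i^*\O_{\wt D}=\O_{\wt D}$ is broken at its last equality. You must instead delete the analytic set $A$ of codimension at least $2$ in $D$ outside of which $D$ is normal crossing: there $\R_D=\O_{\wt D}$ by the explicit computation of Example~\ref{48}\,\eqref{48a}, and a residue, being meromorphic and holomorphic on $\wt D\setminus\pi^{-1}(A)$ with $\codim_{\wt D}\pi^{-1}(A)\ge2$, extends to $\wt D$ by normality of $\wt D$. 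The depth argument you cite is the right tool, but it must be applied to the codimension-two set, not to $Z$.
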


\begin{exa}
The Whitney umbrella (which is not a free divisor) satisfies the conditions in Theorem~\ref{28}, but not those in Theorem~\ref{60}. 
\end{exa}

The implication~\eqref{60d} $\Leftarrow$ \eqref{60c} in Theorem~\ref{60} will be used in the proof of Theorem~\ref{10} after a reduction to nearby germs with smooth irreducible components. 
Its proof essentially relies on parts \eqref{48b} and \eqref{48c} of the following 

\begin{exa}\label{48}\
\begin{asparaenum}

\item\label{48a} Let $D=\{xy=0\}$ be a normal crossing of two irreducible components.
Then $\frac{dx}{x}\in\Omega^1(\log D)$ and 
\[
(x+y)\frac{dx}{x}=y\frac{d(xy)}{xy}+dx-dy
\]
shows that
\[
\rho_D\left(\frac{dx}{x}\right)=\frac{y}{x+y}\Big\vert_D.
\]
On the components $D_1=\{x=0\}$ and $D_2=\{y=0\}$ of the normalization $\wt D=D_1\coprod D_2$, this function equals the constant function $1$ and $0$, respectively, and is therefore not in $\O_D$.
By symmetry, this yields
\[
\R_D=\O_{\wt D}=\O_{D_1}\times\O_{D_2}=\J_D^\vee
\]
since $\J_D=\ideal{x,y}_{\O_D}$ is the maximal ideal in $\O_D=\CC\{x,y\}/\ideal{xy}$.
This observation will be generalized in Proposition~\ref{22}.

\item\label{48b} Conversely, assume that $D_1=\{h_1=x=0\}$ and $D_2=\{h_2=x+y^m=0\}$ are two smooth irreducible components of $D$.
Consider the logarithmic $1$-form
\[
\omega=\frac{ydx-mxdy}{x(x+y^m)}=y^{1-m}\left(\frac{dh_1}{h_1}-\frac{dh_2}{h_2}\right)\in\Omega^1(\log(D_1+D_2))\subset\Omega^1(\log D).
\]
Its residue $\rho_D(\omega)\vert_{D_1}=y^{1-m}\vert_{D_1}$ has a pole along $D_1\cap D_2$ unless $m=1$.
Thus, if $\O_{\wt D}=\R_D$, then $D_1$ and $D_2$ must intersect transversally.

\item\label{48c} Assume that $D$ contains $D'=D_1\cup D_2\cup D_3$ with $D_1=\{x=0\}$, $D_2=\{y=0\}$, and $D_3=\{x-y=0\}$.
Consider the logarithmic $1$-form
\[
\omega=\frac{1}{x-y}\left(\frac{dx}{x}-\frac{dy}{y}\right)\in\Omega^1(\log D')\subset\Omega^1(\log D).
\]
Its residue $\rho_D(\omega)\vert_{D_1}=-\frac{1}{y}\vert_{D_1}$ has a pole along $D_1\cap D_2\cap D_3$ and, hence, $\O_{\wt D}\subsetneq\R_D$.

\end{asparaenum}
\end{exa}

\medskip

After these preparations, we shall now approach the construction of the dual logarithmic residue.
By definition, there is a short exact residue sequence
\begin{equation}\label{2}
\xymat{
0\ar[r]&\Omega^1_S\ar[r]&\Omega^1(\log D)\ar[r]^-{\rho_D}&\R_D\ar[r]&0.
}
\end{equation}
Applying $\Hom_{\O_S}(-,\O_S)$ to \eqref{2} gives an exact sequence
\begin{equation}\label{44}
\xymat@C-1em{
0&\Ext^1_{\O_S}(\Omega^1(\log D),\O_S)\ar[l]&\Ext^1_{\O_S}(\R_D,\O_S)\ar[l]&\Theta_S\ar[l]&\Der(-\log D)\ar[l]&0\ar[l].
}
\end{equation}
The right end of this sequence extends to the short exact sequence \eqref{1}.
For the hypersurface ring $\O_D$, the change of rings spectral sequence
\begin{equation}\label{41}
E^{p,q}_2=\Ext_{\O_D}^p(-,\Ext^q_{\O_S}(\O_D,\O_S))\underset{p}\Rightarrow\Ext_{\O_S}^{p+q}(-,\O_S)
\end{equation}
degenerates because $E^{p,q}_2=0$ if $q\neq 1$ and, hence, 
\begin{equation}\label{43}
\Ext_{\O_S}^1(-,\O_S)\cong E^{0,1}_2\cong\Hom_{\O_D}(-,\O_D)\cong-^\vee.
\end{equation}
Therefore, the second term in the sequence~\eqref{44} is $\R_D^\vee$.
This motivates the following key technical result of this paper, describing the dual of Saito's logarithmic residue.

\begin{prp}\label{22}
There is an exact sequence
\begin{equation}\label{36}
\xymat{
0&\Ext^1_{\O_S}(\Omega^1(\log D),\O_S)\ar[l]&\R_D^\vee\ar[l]&\Theta_S\ar[l]_-{\sigma_D}&\Der(-\log D)\ar[l]&0\ar[l]
}
\end{equation}
such that $\sigma_D(\delta)(\rho_D(\omega))=dh(\delta)\cdot\rho_D(\omega)$.
In particular, $\sigma_D(\Theta_S)=\J_D$ as fractional ideals.
Moreover, $\J_D^\vee=\R_D$ as fractional ideals.
\end{prp}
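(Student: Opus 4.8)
The statement to prove is Proposition~\ref{22}: the construction of the dual residue sequence~\eqref{36}, the formula $\sigma_D(\delta)(\rho_D(\omega))=dh(\delta)\cdot\rho_D(\omega)$, and the two identifications $\sigma_D(\Theta_S)=\J_D$ and $\J_D^\vee=\R_D$ as fractional ideals. The natural approach is to take the sequence~\eqref{44} obtained by dualizing the residue sequence~\eqref{2} over $\O_S$, and to reinterpret each term. First I would use the change-of-rings computation~\eqref{41}--\eqref{43}, which has already been carried out in the excerpt, to rewrite $\Ext^1_{\O_S}(\R_D,\O_S)$ as the $\O_D$-dual $\R_D^\vee$; since $\R_D$ is a fractional ideal (being the image of $\rho_D$, and containing $\O_D$ as it contains the residues of closed forms like $\tfrac{dh_i}{h_i}$), Proposition~\ref{18} guarantees that $\R_D^\vee$ is again a fractional ideal. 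This turns~\eqref{44} into the asserted sequence~\eqref{36} once we splice in the short exact sequence~\eqref{1} on the right, identifying the map $\Theta_S\to\R_D^\vee$ with the claimed $\sigma_D$.

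\textbf{Identifying $\sigma_D$.} The key is to make the connecting/dual map explicit. The map $\sigma_D$ arises as $\Hom_{\O_S}(\rho_D,\O_S)$ followed by the identification~\eqref{43}; so I would unwind the isomorphism $\Ext^1_{\O_S}(\R_D,\O_S)\cong\Hom_{\O_D}(\R_D,\O_D)$ concretely. For $\delta\in\Theta_S$, the functional $\sigma_D(\delta)\in\R_D^\vee$ should be the composite $\R_D\xrightarrow{\rho_D\text{-section}}\Omega^1(\log D)\xrightarrow{\text{eval against }\delta}\O_S\to\O_D$. To compute its value on a residue $\rho_D(\omega)$, write $\omega$ in Saito's normal form~\eqref{4}, $\omega=\tfrac{dh}{h}\wedge\tfrac{\xi}{g}+\tfrac{\eta}{g}$; contracting with $\delta$ and reducing modulo $\I_D$, the second summand dies on $D$ and the first contributes $dh(\delta)\cdot\tfrac{\xi}{g}\vert_D=dh(\delta)\cdot\rho_D(\omega)$, which is exactly the asserted formula. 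This is the computational heart of the proposition and the step I expect to be the main obstacle: one must check that pairing $\delta$ against the two-term expression~\eqref{4} is compatible with the spectral-sequence identification~\eqref{43} rather than merely plausible, i.e. that the abstractly-defined dual map really is given by this contraction formula, and that the answer is independent of the choices of $\xi,\eta,g$ (which is where the well-definedness~\eqref{34} of $\rho_D$ enters).

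\textbf{The two ideal identifications.} Once the formula holds, both remaining claims follow. The formula shows that $\sigma_D(\delta)$ acts on $\R_D$ as multiplication by the element $dh(\delta)\vert_D\in\O_D$; as $\delta$ ranges over $\Theta_S=\bigoplus\O_S\p_{x_i}$, the elements $dh(\delta)$ range over the generators $\tfrac{\p h}{\p x_i}$ of $\J_D$, so the image $\sigma_D(\Theta_S)$ is precisely $\J_D$ viewed inside $\R_D^\vee\subset\M_D$, giving $\sigma_D(\Theta_S)=\J_D$ as fractional ideals. For the last equality, exactness of~\eqref{36} spliced with~\eqref{1} identifies $\J_D$ with a fractional subideal of $\R_D^\vee$, and the image of $\sigma_D$ is $\J_D$, so $\J_D\subseteq\R_D^\vee$; but multiplication by $\J_D$ carries $\R_D$ into $\O_D$ exactly because every $dh(\delta)\cdot\rho_D(\omega)=\sigma_D(\delta)(\rho_D(\omega))\in\O_D$, which says $\J_D\subseteq\R_D^\vee$ and conversely $\R_D\subseteq\J_D^\vee$. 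To upgrade this to equality I would dualize: $\R_D$ and $\J_D$ are maximal Cohen--Macaulay fractional ideals (the latter by Theorem~\ref{19}, the former because it is dual to one), so Proposition~\ref{18} applies and $-^\vee$ is an inclusion-reversing involution; taking duals of $\J_D=\sigma_D(\Theta_S)\subseteq\R_D^\vee$ and combining with $\R_D\subseteq\J_D^\vee$ forces $\J_D^\vee=\R_D$.
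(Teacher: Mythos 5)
Your construction of the sequence \eqref{36}, the identification of $\Ext^1_{\O_S}(\R_D,\O_S)$ with $\R_D^\vee$ via the change of rings \eqref{43}, and the computation of $\sigma_D(\delta)(\rho_D(\omega))=dh(\delta)\cdot\rho_D(\omega)$ by contracting $\delta$ against Saito's normal form \eqref{4} all match the paper's argument; the paper makes the identification of the connecting map precise by expanding the double complex $\Hom_{\O_S}(\Omega^1_S\into\Omega^1(\log D),\,h\colon\O_S\to\O_S)$ and chasing the resulting diagram, which is exactly the compatibility check you flag as the main obstacle. The consequence $\sigma_D(\Theta_S)=\J_D$ is then immediate, as you say.

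The genuine gap is in your proof of the last claim, $\J_D^\vee=\R_D$. First, the appeal to Theorem~\ref{19} to conclude that $\J_D$ is maximal Cohen--Macaulay is only valid when $D$ is free, whereas Proposition~\ref{22} is asserted for arbitrary $D$; note that Corollaries~\ref{24} and \ref{37} show the companion equality $\R_D^\vee=\J_D$ genuinely fails without freeness, so any argument treating the two equalities symmetrically should be suspect. Likewise, ``$\R_D$ is maximal Cohen--Macaulay because it is dual to one'' presupposes $\R_D=\J_D^\vee$, which is what is to be proved. Second, even granting reflexivity, the dualization does not close: applying $-^\vee$ to $\J_D\subseteq\R_D^\vee$ yields $\R_D^{\vee\vee}\subseteq\J_D^\vee$, that is (at best) $\R_D\subseteq\J_D^\vee$ again --- the same inclusion you already had from the multiplication formula, not the needed reverse inclusion $\J_D^\vee\subseteq\R_D$. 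The paper obtains that inclusion by a non-dualizing argument: it runs the same double-complex computation starting from the other short exact sequence \eqref{1}, namely $0\to\Der(-\log D)\to\Theta_S\to\J_D\to0$. Since the middle term $\Theta_S$ is $\O_S$-free, $\Ext^1_{\O_S}(\Theta_S,\O_S)=0$ and the connecting map is surjective with no error term, producing the short exact sequence \eqref{6}, $0\to\Omega^1_S\to\Omega^1(\log D)\to\J_D^\vee\to0$, with surjection $\rho'_D$; an explicit computation with \eqref{4} then shows $\rho'_D=\rho_D$, so the image of $\rho'_D$, which is all of $\J_D^\vee$, coincides with the image $\R_D$ of $\rho_D$. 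This asymmetry --- $\Theta_S$ is free while $\Omega^1(\log D)$ need not be --- is precisely why $\J_D^\vee=\R_D$ holds unconditionally while $\R_D^\vee=\J_D$ requires freeness, and it is the point your proposal misses.
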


\begin{proof}
The spectral sequence \eqref{41} applied to $\R_D$ is associated with
\[
\RHom_{\O_S}(\Omega_S^1\into\Omega^1(\log D),h\colon\O_S\to\O_S).
\]
Expanding the double complex $\Hom_{\O_S}(\Omega_S^1\into\Omega^1(\log D),h\colon\O_S\to\O_S)$, we obtain the following diagram of long exact sequences.
\begin{equation}\label{3}\small
\xymat@C-2em{
&0\ar[d]&0\ar[d]\\
\Ext_{\O_S}^1(\R_D,\O_S)\ar[d]^-{0}&\Hom_{\O_S}(\Omega^1_S,\O_S)\ar[l]\ar[d]^-{h}&\Hom_{\O_S}(\Omega^1(\log D),\O_S)\ar[l]\ar[d]^-{h}&0\ar[l]\\
\Ext_{\O_S}^1(\R_D,\O_S)&\Hom_{\O_S}(\Omega^1_S,\O_S)\ar[l]\ar[d]&\Hom_{\O_S}(\Omega^1(\log D),\O_S)\ar[l]\ar[d]&0\ar[l]\ar[d]\\
&\Hom_{\O_S}(\Omega^1_S,\O_D)\ar[d]&\Hom_{\O_S}(\Omega^1(\log D),\O_D)\ar[l]\ar[d]&\R_D^\vee\ar[l]_-{\rho_D^\vee}\ar[d]^-\alpha&0\ar[l]\\
&0&\Ext^1_{\O_S}(\Omega^1(\log D),\O_S)\ar[l]&\Ext^1_{\O_S}(\R_D,\O_S)\ar[l]\ar[d]\\
&&&0
}
\end{equation}
We can define a homomorphism $\sigma_D$ from the upper left $\Hom_{\O_S}(\Omega^1_S,\O_S)$ to the lower right $\R_D^\vee$ by a diagram chasing process and we find that $\delta\in\Theta_S=\Hom_{\O_S}(\Omega^1_S,\O_S)$ maps to 
\[
\sigma_D(\delta)=\ideal{h\delta,\rho_D^{-1}(-)}\vert_D\in\R_D^\vee
\]
and that \eqref{36} is exact.

By comparison with the spectral sequence, one can check that $\alpha$ is the change of rings isomorphism \eqref{43} applied to $\R_D$, and that $\alpha\circ\sigma_D$ coincides with the connecting homomorphism of the top row of the diagram, which is the same as the one in \eqref{44}.
 
Let $\rho_D(\omega)\in\R_D$ where $\omega\in\Omega^1(\log D)$.
Following the definition of $\rho_D$ in \eqref{34}, we write $\omega$ in the form \eqref{4}.
Then we compute
\begin{align}
\nonumber\sigma_D(\delta)(\rho_D(\omega))&=\\\label{5}
\ideal{h\delta,\omega}\vert_D&=
dh(\delta)\cdot\frac{\xi}{g}\vert_D+h\cdot\frac{\ideal{\delta,\eta}}{g}\vert_D=
dh(\delta)\cdot\rho_D(\omega)
\end{align}
which proves the first two claims.

For the last claim, we consider the following diagram dual to \eqref{3}.
\[\small
\xymat@C-2em{
&&0\ar[d]&0\ar[d]\\
&0\ar[r]&\Hom_{\O_S}(\Theta_S,\O_S)\ar[r]\ar[d]^-{h}&\Hom_{\O_S}(\Der(-\log D),\O_S)\ar[r]\ar[d]^-{h}&\Ext_{\O_S}^1(\J_D,\O_S)\ar[d]^-{0}\\
&0\ar[r]&\Hom_{\O_S}(\Theta_S,\O_S)\ar[r]\ar[d]&\Hom_{\O_S}(\Der(-\log D),\O_S)\ar[r]\ar[d]&\Ext_{\O_S}^1(\J_D,\O_S)\\
0\ar[r]&\J_D^\vee\ar[d]^\beta\ar[r]^-{dh^\vee}&\Hom_{\O_S}(\Theta_S,\O_D)\ar[r]\ar[d]&\Hom_{\O_S}(\Der(-\log D),\O_D)\\
&\Ext_{\O_S}^1(\J_D,\O_S)\ar[r]&0
}
\]
As before, we construct a homomorphism $\rho'_D$ from the upper right $\Hom_{\O_S}(\Der(-\log D),\O_S)$ to the lower left $\J_D^\vee$ such that $\beta\circ\rho'_D$ coincides with the connecting homomorphism of the top row of the diagram, where $\beta$ is the change of rings isomorphism \eqref{43} applied to $\J_D$.
By the diagram, $\omega\in\Omega^1(\log D)=\Hom_{\O_S}(\Der(-\log D),\O_S)$ maps to
\[
\rho'_D(\omega)=\ideal{h\omega,dh^{-1}(-)}\vert_D\in \J_D^\vee
\]
which gives a short exact sequence
\begin{equation}\label{6}
\xymat{
0\ar[r]&\Omega^1_S\ar[r]&\Omega^1(\log D)\ar[r]^-{\rho'_D}&\J_D^\vee\ar[r]&0
}
\end{equation}
similar to the sequence \eqref{2}.
Using \eqref{4} and \eqref{5}, we compute 
\[
\rho'_D(\omega)(\delta(h))=
\rho'_D(\omega)(dh(\delta))=
\ideal{h\omega,\delta}\vert_D=
\rho_D(\omega)\cdot dh(\delta)=
\rho_D(\omega)\cdot \delta(h)
\]
for any $\delta(h)\in \J_D$ where $\delta\in\Theta_S$.
Hence, $\rho'_D=\rho_D$ and the last claim follows using \eqref{2} and \eqref{6}.
\end{proof}

\begin{cor}\label{37}
There is a chain of fractional ideals
\[
\J_D\subseteq\R_D^\vee\subseteq\C_D\subseteq\O_D\subseteq\O_{\wt D}\subseteq\R_D
\]
in $\M_D$ where $\C_D=\O_{\wt D}^\vee$ is the conductor ideal of $\pi$.
In particular, $\J_D\subseteq\C_D$.
\end{cor}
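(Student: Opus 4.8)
The plan is to read the entire chain off from two facts already in hand --- the identification $\J_D^\vee=\R_D$ from Proposition~\ref{22} and an elementary ``upper half'' of inclusions --- and then to generate the ``lower half'' purely formally, by applying the duality functor $-^\vee$, which reverses inclusions by Proposition~\ref{18}. Throughout, all the relevant objects ($\J_D$ by Lemma~\ref{25}, and likewise $\R_D$, $\O_D$, $\O_{\wt D}$) are regarded as fractional ideals inside $\M_D$, so that Proposition~\ref{18} applies to each of them; note that $\C_D=\O_{\wt D}^\vee=\{f\in\M_D\mid f\cdot\O_{\wt D}\subseteq\O_D\}$ is exactly the conductor of $\pi$, and $\O_D^\vee=\O_D$ since $1\in\O_D$.

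First I would record the three right-hand inclusions $\O_D\subseteq\O_{\wt D}\subseteq\R_D$. The inclusion $\O_D\subseteq\O_{\wt D}$ is the defining property of the normalization. The inclusion $\O_{\wt D}\subseteq\R_D$ is the assertion that every weakly holomorphic function on $D$ is the residue of some logarithmic $1$-form; this is the single genuinely non-formal ingredient, and I would take it from Saito's residue theory (see \cite{Sai80}), the same fact underlying the implication \eqref{28b} $\Rightarrow$ \eqref{28c} of Theorem~\ref{28}.

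Next I would dualize. Applying $-^\vee$ to $\O_D\subseteq\O_{\wt D}\subseteq\R_D$ and using that duality reverses inclusions (Proposition~\ref{18}) gives
\[
\R_D^\vee\subseteq\O_{\wt D}^\vee\subseteq\O_D^\vee,
\]
that is $\R_D^\vee\subseteq\C_D\subseteq\O_D$ after substituting $\O_{\wt D}^\vee=\C_D$ and $\O_D^\vee=\O_D$. It remains to attach $\J_D$ on the far left, for which I would invoke the biduality inclusion $\J_D\subseteq\J_D^{\vee\vee}$: this holds for any fractional ideal since for $x\in\J_D$ and $f\in\J_D^\vee$ one has $xf\in\O_D$, whence $x\in(\J_D^\vee)^\vee$. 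Combined with $\J_D^{\vee\vee}=(\J_D^\vee)^\vee=\R_D^\vee$ from Proposition~\ref{22}, this yields $\J_D\subseteq\R_D^\vee$. Concatenating all the inclusions produces the asserted chain, and the ``in particular'' statement $\J_D\subseteq\C_D$ is then immediate from $\J_D\subseteq\R_D^\vee\subseteq\C_D$.

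The only delicate points, and where I would be most careful, are the bookkeeping ones: getting the direction of every dualization right, and resisting the temptation to upgrade the leftmost inclusion $\J_D\subseteq\R_D^\vee$ to an equality. An equality $\J_D=\J_D^{\vee\vee}=\R_D^\vee$ would require $\J_D$ to be reflexive, i.e.\ maximal Cohen--Macaulay, which by Theorem~\ref{19} holds exactly when $D$ is free; in that free case the chain collapses at the left to $\J_D=\R_D^\vee$, and this is precisely what will later make the comparison of $\J_D$ with $\C_D$ in Theorem~\ref{16} accessible. For a general divisor only the inclusion survives.
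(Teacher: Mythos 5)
Your proof is correct and follows essentially the same route as the paper: establish $\O_D\subseteq\O_{\wt D}\subseteq\R_D$ from Saito's residue theory, dualize via Proposition~\ref{18}, and use Proposition~\ref{22} to place $\J_D$ at the left end. The only cosmetic difference is that you obtain $\J_D\subseteq\R_D^\vee$ from biduality applied to $\J_D^\vee=\R_D$, whereas the paper reads it off directly from the exact sequence \eqref{36} (where $\sigma_D$ maps $\Theta_S$ into $\R_D^\vee$ with image $\J_D$); both are valid and rest on the same proposition.
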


\begin{proof}
By Lemma~\ref{25}, $\J_D$ is a fractional ideal contained in $\R_D^\vee$ by Proposition~\ref{22}.
By \cite[(2.7),(2.8)]{Sai80}, $\R_D$ is a finite $\O_D$-module containing $\O_{\wt D}$ and, hence, a fractional ideal.
The remaining inclusions and fractional ideals are then obtained using Proposition~\ref{18}.
\end{proof}

\begin{cor}\label{24}
If $D$ is free, then $\J_D=\R_D^\vee$ as fractional ideals.
\end{cor}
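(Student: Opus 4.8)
The plan is to deduce the claim directly from the duality established in Proposition~\ref{22}, combined with the reflexivity of maximal Cohen--Macaulay fractional ideals from Proposition~\ref{18}. The natural starting point is the last assertion of Proposition~\ref{22}, namely that $\J_D^\vee=\R_D$ as fractional ideals. Applying the duality functor $-^\vee$ once more to this identity gives
\[
\R_D^\vee=(\J_D^\vee)^\vee=\J_D^{\vee\vee},
\]
so the entire statement reduces to showing that $\J_D$ is fixed by the double dual, that is, $\J_D^{\vee\vee}=\J_D$.

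For this step I would invoke Theorem~\ref{19}: since $D$ is free, the Jacobian ideal $\J_D$ is a maximal Cohen--Macaulay $\O_D$-module by the equivalence \eqref{19a}$\Leftrightarrow$\eqref{19b}. By Lemma~\ref{25} it is moreover a fractional ideal, hence a maximal Cohen--Macaulay fractional ideal in the precise sense of Proposition~\ref{18}. That proposition asserts that $-^\vee$ is an involution on exactly this class of fractional ideals, which yields $\J_D^{\vee\vee}=\J_D$ and therefore $\R_D^\vee=\J_D$, as claimed.

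I do not expect a genuine obstacle here, since all of the analytic and homological content has already been packaged into Propositions~\ref{18} and~\ref{22}; the only point that requires care is verifying that $\J_D$ really lies in the class on which duality is involutive. Concretely, one must confirm both that $\J_D$ is a fractional ideal—so that $-^\vee$ applies to it at all—and that freeness is precisely the hypothesis making it maximal Cohen--Macaulay. The former is supplied by Lemma~\ref{25} and the latter by Theorem~\ref{19}, so no further estimates, diagram chasing, or geometric input beyond the already-constructed dual residue sequence are needed.
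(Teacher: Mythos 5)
Your proof is correct, but it takes a genuinely different route from the paper's. The paper argues directly from the dual residue sequence \eqref{36}: freeness of $D$ makes $\Omega^1(\log D)$ a free $\O_S$-module (being the dual of the free module $\Der(-\log D)$), so the term $\Ext^1_{\O_S}(\Omega^1(\log D),\O_S)$ at the left end of \eqref{36} vanishes, $\sigma_D$ becomes surjective, and the identification $\sigma_D(\Theta_S)=\J_D$ from the first part of Proposition~\ref{22} gives $\R_D^\vee=\J_D$ immediately. You instead start from the \emph{last} assertion of Proposition~\ref{22}, namely $\J_D^\vee=\R_D$ (which holds with no freeness hypothesis), dualize once more, and then use Theorem~\ref{19} together with Lemma~\ref{25} to place $\J_D$ in the class of maximal Cohen--Macaulay fractional ideals on which $-^\vee$ is an involution by Proposition~\ref{18}. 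Both arguments are short and legitimate. The paper's version stays inside the homological machinery it has just built and needs only the exactness of \eqref{36}; yours isolates more cleanly where freeness enters --- solely to guarantee that $\J_D$ is reflexive --- and makes visible that the corollary would hold for any divisor whose Jacobian ideal is a maximal Cohen--Macaulay fractional ideal, which by Theorem~\ref{19} is of course again exactly freeness. The one point you rightly flag, that $\J_D$ must be checked to be a fractional ideal before $-^\vee$ can be applied, is indeed supplied by Lemma~\ref{25}, so there is no gap.
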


\begin{proof}
If $D$ is free, then the Ext-module in the exact sequence \eqref{36} disappears and $\sigma_D$ becomes surjective.
Then the claim is part of the statement of Proposition~\ref{22}.
\end{proof}

By Corollary~\ref{37}, the inclusion $\O_{\wt D}\subset\R_D$ always holds.
For a free divisor $D$, the case of equality is translated into more familiar terms by the following corollary.

\begin{cor}\label{42}
If $D$ is free, then $\R_D=\O_{\wt D}$ is equivalent to $\J_D=\C_D$.
\end{cor}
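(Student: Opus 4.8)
The plan is to deduce both implications directly from the duality identifications already at hand, applying the functor $-^\vee=\Hom_{\O_D}(-,\O_D)$ of Proposition~\ref{18} to the two equalities in question. The relevant dictionary consists of three facts: first, $\R_D=\J_D^\vee$ by Proposition~\ref{22}; second, since $D$ is free, $\J_D=\R_D^\vee$ by Corollary~\ref{24}; and third, $\C_D=\O_{\wt D}^\vee$ by the description of the conductor in Corollary~\ref{37}. With these in place, the equivalence $\R_D=\O_{\wt D}\Leftrightarrow\J_D=\C_D$ should follow by simply dualizing each side.

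For the forward implication I would start from $\R_D=\O_{\wt D}$ and apply $-^\vee$ to obtain $\R_D^\vee=\O_{\wt D}^\vee$. Reading the left-hand side through Corollary~\ref{24} and the right-hand side through Corollary~\ref{37} turns this into $\J_D=\C_D$; note that this is exactly where freeness of $D$ enters, through the equality $\J_D=\R_D^\vee$. For the converse I would start from $\J_D=\C_D$ and dualize to get $\J_D^\vee=\C_D^\vee$. Proposition~\ref{22} rewrites the left side as $\R_D$, while the right side unfolds as $\C_D^\vee=(\O_{\wt D}^\vee)^\vee=\O_{\wt D}^{\vee\vee}$, so it remains only to identify the double dual $\O_{\wt D}^{\vee\vee}$ with $\O_{\wt D}$ itself.

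The one point requiring care—and, I suspect, the reason Lemma~\ref{39} was recorded separately—is precisely this reflexivity of $\O_{\wt D}$ as an $\O_D$-module. I would justify it by observing that $\O_D$ is a hypersurface ring, hence Gorenstein, and that its normalization $\O_{\wt D}$ is a finite $\O_D$-module; Lemma~\ref{39} then yields $\O_{\wt D}^{\vee\vee}=\O_{\wt D}$. Feeding this back gives $\R_D=\O_{\wt D}$ and completes the converse. I expect no further obstacle: the forward direction is a formal consequence of the two residue-duality identifications and needs no reflexivity input, whereas the converse becomes entirely formal once the reflexivity of $\O_{\wt D}$ is granted.
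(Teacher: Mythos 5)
Your proof is correct and follows essentially the same route as the paper: the authors likewise observe that freeness makes $\R_D$ and $\J_D$ mutually $\O_D$-dual (via Corollary~\ref{24} and Proposition~\ref{22}), that $\O_{\wt D}$ and $\C_D$ are mutually dual by the definition of the conductor together with Lemma~\ref{39}, and then conclude by dualizing. Your write-up merely makes explicit the two dualization steps that the paper summarizes as ``the claim follows.''
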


\begin{proof}
By the preceding Corollary~\ref{24} and the last statement of Proposition~\ref{22}, the freeness of $D$ implies that $\R_D$ and $\J_D$ are mutually $\O_D$-dual.
By Lemma~\ref{39} and the definition of the conductor, the same holds true for $\O_{\wt D}$ and $\C_D$. 
The claim follows.
\end{proof}

\section{Algebraic normal crossing conditions}

In this section, we prove our main Theorem~\ref{10} settling the missing implication in Theorem~\ref{28}.
We begin with some general preparations.

\begin{lem}\label{35}
Any map $\phi\colon Y\to X$ of analytic germs with $\Omega^1_{Y/X}=0$ is an immersion.
\end{lem}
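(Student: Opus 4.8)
The plan is to translate the geometric statement into commutative algebra. A morphism of analytic germs $\phi\colon Y\to X$ is an immersion precisely when the induced local homomorphism of analytic local rings $\phi^*\colon\O_X\to\O_Y$ is surjective, so I would reduce the lemma to proving surjectivity of $\phi^*$ from the hypothesis $\Omega^1_{Y/X}=\Omega^1_{\O_Y/\O_X}=0$. The two inputs I rely on are this dictionary between immersions and surjections, and the behaviour of Kähler differentials under a chain of ring maps.

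First I would exploit the first fundamental exact sequence of Kähler differentials attached to $\CC\to\O_X\xrightarrow{\phi^*}\O_Y$:
\[
\O_Y\otimes_{\O_X}\Omega^1_{\O_X/\CC}\longrightarrow\Omega^1_{\O_Y/\CC}\longrightarrow\Omega^1_{\O_Y/\O_X}\longrightarrow0.
\]
Since the last term vanishes by hypothesis, the first arrow is onto. Tensoring over $\O_Y$ with the residue field $\CC=\O_Y/\mm_Y$ and using the standard identifications $\Omega^1_{\O_X/\CC}\otimes_{\O_X}\CC\cong\mm_X/\mm_X^2$ and $\Omega^1_{\O_Y/\CC}\otimes_{\O_Y}\CC\cong\mm_Y/\mm_Y^2$, I obtain that the induced cotangent map $\mm_X/\mm_X^2\to\mm_Y/\mm_Y^2$ is surjective.

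Next I would run Nakayama's lemma twice. Surjectivity on cotangent spaces means that the images under $\phi^*$ of a minimal generating set of $\mm_X$ span $\mm_Y/\mm_Y^2$, hence generate $\mm_Y$ as an ideal of the Noetherian local ring $\O_Y$; that is, $\mm_X\O_Y=\mm_Y$. In particular the special fibre $\O_Y/\mm_X\O_Y=\O_Y/\mm_Y=\CC$ is finite dimensional over $\CC$, so by the Weierstrass finiteness theorem for analytic algebras $\O_Y$ is a finite $\O_X$-module. Then $\Coker\phi^*=\O_Y/\phi^*(\O_X)$ is a finite $\O_X$-module, and its reduction modulo $\mm_X$ is $\O_Y/(\phi^*(\O_X)+\mm_X\O_Y)$; since $\mm_X\O_Y=\mm_Y$ and $\phi^*(\O_X)\supseteq\CC$, the denominator is already all of $\O_Y$, so this reduction vanishes. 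A final application of Nakayama gives $\Coker\phi^*=0$, i.e.\ $\phi^*$ is surjective, and $\phi$ is an immersion.

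The main obstacle is less the algebra than making the two structural inputs precise in the analytic category: identifying immersions of germs with surjections of analytic local rings, and invoking the analytic (Weierstrass) finiteness theorem to pass from $\dim_\CC\O_Y/\mm_X\O_Y<\infty$ to module-finiteness of $\O_Y$ over $\O_X$. These are exactly the points where one uses that $Y\to X$ is a genuine morphism of convergent (rather than merely formal) germs; both are standard, and the remaining steps are the routine Nakayama and differential-sequence manipulations above.
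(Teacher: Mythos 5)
Your argument is correct, but it takes a genuinely different route from the paper. The paper's proof is extrinsic: it embeds $\phi$ into a map $\Phi\colon T\to S$ of smooth germs, chooses $T$ of minimal embedding dimension so that $\I_Y\subseteq\mm_T^2$ and hence $d\I_Y\subseteq\mm_T\Omega^1_T$, and then a single application of Nakayama to $\Omega^1_T=\sum\O_Td\Phi_i+\mm_T\Omega^1_T$ shows that the \emph{ambient} map $\Phi$ is already a closed embedding, whence so is $\phi$. Your proof is intrinsic: you never choose an embedding, but instead read off surjectivity of $\mm_X/\mm_X^2\to\mm_Y/\mm_Y^2$ from the cotangent sequence, and then need two further inputs to climb from the cotangent level back up to the rings, namely the Weierstrass finiteness theorem (to know $\O_Y$ is a finite $\O_X$-module once $\O_Y/\mm_X\O_Y=\CC$) and a second Nakayama applied to $\Coker\phi^*$. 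What the paper's version buys is economy -- one Nakayama on a free module over a regular local ring and the rank theorem do all the work -- at the cost of the slightly fussy normalization $\I_Y\subseteq\mm_T^2$; what your version buys is coordinate-freedom and the reusable intermediate facts $\mm_X\O_Y=\mm_Y$ and finiteness of $\phi$, at the cost of invoking the analytic finiteness theorem. Both correctly use that ``immersion'' here means closed immersion, i.e.\ surjectivity of $\phi^*$, and your identification $\Omega^1_Y\otimes_{\O_Y}\CC\cong\mm_Y/\mm_Y^2$ is valid for the (universally finite) analytic differentials used throughout the paper, so there is no gap.
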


\begin{proof}
The map $\phi$ can be embedded in a map $\Phi$ of smooth analytic germs:
\[
\xymat{
Y\ar@{^(->}[r]\ar[d]^-\phi&T\ar[d]^-\Phi\\
X\ar@{^(->}[r]&S.
}
\]
Setting $\Phi_i=x_i\circ\Phi$ and $\phi_i=\Phi_i+\I_Y$ for coordinates $x_1,\dots,x_n$ on $S$ and $\I_Y$ the defining ideal of $Y$ in $T$, we can write $\Phi=(\Phi_1,\dots,\Phi_n)$ and $\phi=(\phi_1,\dots,\phi_n)$ and hence
\begin{equation}\label{26}
\Omega^1_{Y/X}=\frac{\Omega^1_Y}{\sum_{i=1}^n\O_Yd\phi_i}=\frac{\Omega^1_T}{\O_Td\I_Y+\sum_{i=1}^n\O_Td\Phi_i}.
\end{equation}
We may choose $T$ of minimal dimension so that $\I_Y\subseteq\mm_T^2$ and hence $d\I_Y\subseteq\mm_T\Omega^1_T$.
Now \eqref{26} and the hypothesis $\Omega^1_{Y/X}=0$ show that $\Omega^1_T=\sum_{i=1}^n\O_Td\Phi_i+\mm_T\Omega^1_T$ which implies that $\Omega^1_T=\sum_{i=1}^n\O_Td\Phi_i$ by Nakayama's lemma.
But then $\Phi$ and hence $\phi$ is a closed embedding as claimed.
\end{proof}

\begin{lem}\label{7}
If $\J_D=\C_D$ and $\wt D$ is smooth then $D$ has smooth irreducible components.
\end{lem}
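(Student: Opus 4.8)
The plan is to deduce the statement from Ragni Piene's comparison of the Jacobian ideal with the conductor via the ramification of the normalization, combined with the immersion criterion of Lemma~\ref{35}. I pass to the normalization $\pi\colon\wt D\to D$ and work on the germs $\wt D_i$ that constitute $\wt D$. Since each analytic branch $D_i$ of $D$ is analytically irreducible, its normalization is a single smooth germ $\wt D_i$ (smooth by hypothesis), with $\O_{\wt D}=\prod_i\O_{\wt D_i}$ a product of regular local rings. Because $\O_D\onto\O_{D_i}$ is surjective, the relative differentials satisfy $\Omega^1_{\wt D_i/D}=\Omega^1_{\wt D_i/D_i}$, so it suffices to prove $\Omega^1_{\wt D_i/D_i}=0$ for each $i$: then Lemma~\ref{35} makes $\pi_i\colon\wt D_i\to D_i$ a closed immersion, and a finite, birational closed immersion onto the irreducible germ $D_i$ is an isomorphism; hence $D_i\cong\wt D_i$ is smooth.

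The key input is Piene's result~\cite{Pie79}, which I would invoke in the form of the factorization
\[
\J_D\cdot\O_{\wt D}=\C_D\cdot\mathfrak{d}_\pi,\qquad \mathfrak{d}_\pi:=\F^0_{\O_{\wt D}}(\Omega^1_{\wt D/D}),
\]
relating the Jacobian (K\"ahler different) to the product of the conductor with the ramification ideal $\mathfrak{d}_\pi$; for the argument only the inclusion $\J_D\O_{\wt D}\subseteq\C_D\,\mathfrak{d}_\pi$ is needed. As a sanity check one verifies on the cusp $y^2=x^3$ that $\J_D\O_{\wt D}=(t^3)$, $\C_D=(t^2)$ and $\mathfrak{d}_\pi=(t)$, and on the node $xy=0$ that both sides equal $\mm_1\times\mm_2$ with $\mathfrak{d}_\pi=\O_{\wt D}$. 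Since $\C_D$ is already an $\O_{\wt D}$-ideal, the hypothesis $\J_D=\C_D$ yields $\J_D\O_{\wt D}=\C_D$, so the inclusion reads $\C_D\subseteq\C_D\,\mathfrak{d}_\pi$; the opposite inclusion holds trivially because $\mathfrak{d}_\pi\subseteq\O_{\wt D}$, whence $\C_D=\C_D\,\mathfrak{d}_\pi$.

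It remains to localize at a branch. On the regular local ring $\O_{\wt D_i}$ the ideal $I:=\C_D\O_{\wt D_i}$ is nonzero, since $\C_D$ contains a non-zero divisor of $\O_D$, whose image is nonzero in the domain $\O_{\wt D_i}$ (it avoids the minimal prime $\pp_i$). The relation $I=I\cdot(\mathfrak{d}_\pi\O_{\wt D_i})$ together with Nakayama's lemma forbids $\mathfrak{d}_\pi\O_{\wt D_i}\subseteq\mm_{\wt D_i}$, so $\mathfrak{d}_\pi\O_{\wt D_i}=\O_{\wt D_i}$; equivalently $\F^0_{\O_{\wt D_i}}(\Omega^1_{\wt D_i/D_i})=\O_{\wt D_i}$, which forces $\Omega^1_{\wt D_i/D_i}=0$. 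Feeding this into Lemma~\ref{35} finishes the proof as described above.

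The main obstacle I anticipate is pinning down the exact form of Piene's comparison and its validity in the analytic hypersurface-germ setting, as opposed to the algebraic curve setting of loc.\ cit.: one must ensure that the discrepancy between conductor and Jacobian is captured precisely by $\F^0(\Omega^1_{\wt D/D})$, and that an inclusion rather than a full equality suffices --- which it does, thanks to the cheap reverse inclusion $\C_D\,\mathfrak{d}_\pi\subseteq\C_D$. Once this factorization is in hand, the Nakayama cancellation and the passage to branches are routine.
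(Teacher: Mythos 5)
Your proposal is correct and follows essentially the same route as the paper: Piene's factorization $\J_D\O_{\wt D}=\C_D\R_\pi$ with $\R_\pi=\F^0_{\O_{\wt D}}(\Omega^1_{\wt D/D})$, cancellation of the conductor via Nakayama to get $\Omega^1_{\wt D/D}=0$, base change to each branch, and Lemma~\ref{35} to see that $\wt D_i\to D_i$ is an isomorphism. Your branch-by-branch application of Nakayama (checking $\C_D\O_{\wt D_i}\neq 0$) is a slightly more careful rendering of the paper's one-line Nakayama step on the semilocal ring $\O_{\wt D}$, and the observation that only one inclusion of Piene's equality is needed is a minor refinement, not a different argument.
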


\begin{proof}
By definition, the ramification ideal of $\pi$ is the Fitting ideal
\[
\R_\pi:=\F^0_{\O_{\wt D}}(\Omega^1_{\wt D/D}).
\]
As a special case of a result of Ragni Piene~\cite[Cor.~1, Prop.~1]{Pie79} (see also \cite[Cor.~2.7]{OZ87}), 
\[
\C_D\R_\pi=\J_D\O_{\wt D}
\]
By hypothesis, this becomes
\[
\C_D\R_\pi=\C_D
\]
since $\C_D$ is an ideal in both $\O_D$ and $\O_{\wt D}$.
As $\C_D\cong\O_{\wt D}$ (see \cite[Prop.~3.5.iii)]{MP89}), it follows that that $\R_\pi=\O_{\wt D}$ and hence that $\Omega^1_{\wt D/D}=0$.

Since $\wt D$ is normal, irreducible and connected components coincide.
By localization to a connected component $\wt D_i$ of $\wt D$ and base change to $D_i=\pi(\wt D_i)$ (see \cite[Ch.~II, Prop.~8.2A]{Har77}), we obtain $\Omega^1_{\wt D_i/D_i}=0$.
Then the normalization $\wt D_i\to D_i$ is an immersion by Lemma~\ref{35} and hence $D_i=\wt D_i$ is smooth.
\end{proof}

We are now ready to prove our main results.

\begin{proof}[Proof of Theorem~\ref{10}]
In codimension one, $D$ is free by Corollary~\ref{27} and hence $\J_D=\C_D$ by Corollary~\ref{42} and our hypothesis.
Moreover, $\wt D$ is smooth in codimension one by normality. 
By our language convention, this means that there is an analytic subset $A\subset D$ of codimension at least two such that, for $p\in D\setminus A$, $\J_{D,p}=\C_{D,p}$ and $\wt D$ is smooth above $p$.
From Lemma~\ref{7} we conclude that the local irreducible components $D_i$ of the germ $(D,p)$ are smooth. 
The hypothesis $\R_D=\O_{\wt D}$ at $p$ then reduces to the equality $\R_{D,p}=\bigoplus\O_{D_i}$.
Thus, the implication \eqref{60d} $\Leftarrow$ \eqref{60c} in Theorem~\ref{60} yields the claim. 
\end{proof}

\begin{proof}[Proof of Theorem~\ref{16}]
In order to prove that \eqref{28d} implies \eqref{28b}, we may assume that $Z$ is smooth and hence defined in $S$ by two of the generators $h,\p_1(h),\dots,\p_n(h)$.
Then the triviality lemma~\cite[(3.5)]{Sai80} shows that $D$ is either smooth or $D\cong C\times(\CC^{n-2},0)$ and $C\subset(\CC^2,0)$ a plane curve. 
We may therefore reduce to the case of a plane curve.
Then the Mather--Yau theorem~\cite{MY82} applies (see \cite[Prop.~9]{Fab12} for details).

Now assume that $D$ is free and normal crossing in codimension one. 
By the first assumption and Theorem~\ref{19}, $Z$ is Cohen--Macaulay of codimension one and, in particular, satisfies Serre's condition $(S_1)$.
By the second assumption, $Z$ also satisfies Serre's condition $(R_0)$.
Then $Z$ is reduced, and hence $\J_D$ is radical, by Serre's reducedness criterion.
This proves that \eqref{28b} implies \eqref{28d} for free $D$.

The last equivalence then follows from Theorems~\ref{28} and \ref{10} and Corollary~\ref{42}.
\end{proof}

\begin{proof}[Proof of Theorem~\ref{38}]
By Theorems~\ref{28}, \ref{10}, and \ref{16}, we may assume that $\J_D=\C_D$.
Then Lemma~\ref{7} shows that the irreducible components $D_i=\{h_i=0\}$, $i=1,\dots,m$, of $D$ are smooth, and hence normal.
It follows that 
\[
\R_D=\O_{\wt D}=\bigoplus_{i=1}^m\O_{\wt D_i}=\bigoplus_{i=1}^m\O_{D_i}.
\]
By the implication \eqref{60a} $\Leftarrow$ \eqref{60c} in Theorem~\ref{60}, this is equivalent to 
\begin{equation}\label{51}
\Omega^1(\log D)=\sum_{i=1}^m\O_S\frac{dh_i}{h_i}+\Omega^1_S.
\end{equation}
On the other hand, Saito's criterion \cite[(1.8) Thm. i)]{Sai80} for freeness of $D$ reads
\begin{equation}\label{52}
\bigwedge^n\Omega^1(\log D)=\Omega^n_S(D).
\end{equation}
Combining \eqref{51} and \eqref{52}, it follows immediately that $D$ is normal crossing (see also \cite[Prop.~B]{Fab12}):

Considered as an $\O_S$-module and modulo $\Omega_S^n$, the left-hand side of \eqref{52} is, due to \eqref{51}, generated by expressions 
\begin{gather}\label{53}
\frac{d h_{i_1}\wedge\dots\wedge d h_{i_k}\wedge dx_{j_1}\wedge\dots\wedge dx_{j_{n-k}}}{h_{i_1}\cdots h_{i_k}},\\ 
\nonumber1\le i_1<\cdots<i_k\le m, \quad 1\le j_1<\cdots<j_{n-k}\le n\ge k
\end{gather}
whereas the right-hand side of \eqref{52} is generated by
\[
\frac{d x_1\wedge\dots\wedge d x_n}{h_1\cdots h_m}.
\]
In order for an instance of \eqref{53} to attain the denominator of this latter expression, it must satisfy $k=m$ and, in particular, $m\le n$.
Further, comparing numerators, $d h_1\wedge\dots\wedge d h_m\wedge dx_{j_1}\wedge\dots\wedge dx_{j_{n-m}}$ must be a unit multiple of $d x_1\wedge\dots\wedge d x_n$.
In other words, choosing $i_1,\dots,i_m$ such that $\{i_1,\dots,i_m,j_1,\dots,j_{n-m}\}=\{1,\dots,n\}$, 
\[
\frac{\p(h_1,\dots,h_m)}{\p(x_{i_1},\dots,x_{i_m})}\in\O_S^*.
\]
By the implicit function theorem, $h_1,\dots,h_m,x_{j_1},\dots,x_{j_{n-m}}$ is then a coordinate system and hence $D$ is a normal crossing divisor as claimed.
\end{proof}

\section{Gorenstein singular locus}

In this section, we describe the canonical module of the singular locus $Z$ in terms of the module $\R_D$ of logarithmic residues.
Then, we prove Theorem~\ref{40}.

The complex of logarithmic differential forms along $D$ relative to the map $h\colon S\to T:=(\CC,0)$ defining $D$ is defined as $\Omega^\bullet(\log h):=\Omega^\bullet(\log D)/\frac{dh}h\wedge\Omega^{\bullet-1}(\log D)$ (see \cite[\S22]{GMS09} or \cite[Def.~2.7]{DS12}).

\begin{prp}\label{20}
The $\O_Z$-module $\R_h:=\R_D/\O_D$ fits into an exact square
\[
\xymat{
&0\ar[d]&0\ar[d]&0\ar[d]\\
0\ar[r]&\O_S\ar[r]^-h\ar[d]^-{dh}&\O_S\ar[r]\ar[d]^-{\frac{dh}{h}}&\O_D\ar[r]\ar[d]&0\\
0\ar[r]&\Omega^1_S\ar[r]\ar[d]&\Omega^1(\log D)\ar[r]^-{\rho_D}\ar[d]&\R_D\ar[r]\ar[d]&0\\
0\ar[r]&\Omega^1_{S/T}\ar[r]\ar[d]&\Omega^1(\log h)\ar[r]\ar[d]&\R_h\ar[r]\ar[d]&0\\
&0&0&0
}.
\]
If $D$ is free then $Z$ is Cohen--Macaulay with canonical module $\omega_Z=\R_h$; in particular, $Z$ is Gorenstein if and only if $\R_D$ is generated by $1$ and one additional generator.
\end{prp}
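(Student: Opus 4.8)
The plan is to establish the three assertions separately. First I realize the exact square as the snake-lemma output of a map between two known short exact sequences; then I compute the canonical module of $Z$ by dualizing a presentation of $\O_Z$ over the Gorenstein ring $\O_D$; finally I read off the Gorenstein criterion from cyclicity of the canonical module. For the square, I would regard the hypersurface sequence $0\to\O_S\xrightarrow{h}\O_S\to\O_D\to0$ and the residue sequence \eqref{2} as a commutative ladder with vertical maps $dh$, $\frac{dh}{h}$, and the inclusion $\O_D\into\R_D$ of Corollary~\ref{37}. Commutativity of the left square is the identity $h\cdot\frac{dh}{h}=dh$, and commutativity of the right square is the computation $\rho_D\left(\frac{dh}{h}\right)=1$, obtained from \eqref{4} and \eqref{34} with $\xi=g=1$, $\eta=0$. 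All three vertical maps are injective ($dh$ and $\frac{dh}{h}$ because $\Omega^1_S$ and $\Omega^1(\log D)$ are torsion free and $dh\neq0$, the third because it is an inclusion), so the snake lemma produces the bottom row $0\to\Omega^1_{S/T}\to\Omega^1(\log h)\to\R_h\to0$ and simultaneously exhibits the three columns as short exact sequences with cokernels $\Omega^1_{S/T}=\Coker(dh)$, $\Omega^1(\log h)=\Coker\left(\frac{dh}{h}\right)$, and $\R_h=\R_D/\O_D$. This yields the whole diagram.

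For the canonical module I assume $D$ singular and free, so that Theorem~\ref{19} makes $Z$ Cohen--Macaulay of codimension $1$ in the Gorenstein hypersurface ring $\O_D$, whose canonical module is $\omega_D\cong\O_D$. I would apply $\Hom_{\O_D}(-,\O_D)$ to the presentation $0\to\J_D\to\O_D\to\O_Z\to0$. Here $\Hom_{\O_D}(\O_Z,\O_D)=\Ann_{\O_D}\J_D=0$ since $\J_D$ contains a non-zero divisor (Lemma~\ref{25}), $\Ext^1_{\O_D}(\O_D,\O_D)=0$, and $\Hom_{\O_D}(\J_D,\O_D)=\J_D^\vee=\R_D$ by Proposition~\ref{18} and the last claim of Proposition~\ref{22}. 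The long exact sequence then collapses to $0\to\O_D\to\R_D\to\Ext^1_{\O_D}(\O_Z,\O_D)\to0$, whose first map is the restriction of $\mathrm{id}_{\O_D}$, i.e.\ exactly the inclusion $\O_D\into\R_D$. By Grothendieck duality over the Gorenstein ring $\O_D$, the term $\Ext^1_{\O_D}(\O_Z,\omega_D)=\omega_Z$ is the canonical module, so $\omega_Z\cong\R_D/\O_D=\R_h$.

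For the Gorenstein statement, I first note $\J_D\R_D=\J_D\J_D^\vee\subseteq\O_D$, so $\J_D$ annihilates $\R_h$, confirming that $\R_h$ is an $\O_Z$-module as claimed. Since $Z$ is Cohen--Macaulay, it is Gorenstein precisely when $\omega_Z\cong\R_h$ is cyclic, that is $\mu_{\O_Z}(\R_h)=1$ (see \cite[Thm.~3.3.7]{BH93}). Reading this through the short exact sequence $0\to\O_D\to\R_D\to\R_h\to0$, in which $1\in\O_D$ generates the submodule, cyclicity of $\R_h$ is equivalent to $\R_D$ being generated over $\O_D$ by $1$ together with one further element, which is the asserted criterion.

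The formal ingredients—the snake lemma and the dictionary between Gorensteinness and cyclicity of the canonical module—are routine, so I expect the crux to be the identification $\omega_Z\cong\R_h$. The delicate points there are invoking freeness at exactly the right place, through Theorem~\ref{19}, to guarantee that $Z$ is Cohen--Macaulay of codimension $1$ so that $\Ext^1_{\O_D}(\O_Z,\omega_D)$ genuinely is the canonical module, and recognizing $\Hom_{\O_D}(\J_D,\O_D)$ as $\R_D$ via Proposition~\ref{22}; this last identification is where the residue duality developed in Section~\ref{13} carries the real weight.
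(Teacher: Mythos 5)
Your proposal is correct and follows essentially the same route as the paper: the square comes from the Snake Lemma applied to the ladder between the hypersurface sequence and the residue sequence \eqref{2}, and the identification $\omega_Z=\R_h$ comes from dualizing $0\to\J_D\to\O_D\to\O_Z\to0$ over the Gorenstein ring $\O_D$ together with $\J_D^\vee=\R_D$ from Proposition~\ref{22} and the Cohen--Macaulayness of $Z$ from Theorem~\ref{19}. You simply spell out the details (commutativity and injectivity checks, the collapse of the long exact sequence, and the passage from cyclicity of $\omega_Z$ to the two-generator description of $\R_D$) that the paper leaves implicit.
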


\begin{proof}
We set $\omega_\emptyset:=0$ in case $D$ is smooth and assume $Z\ne\emptyset$ in the following.
The exact square arises from the residue sequence \eqref{2} using the Snake lemma.
Dualizing the short exact sequence
\[
\xymat{
0\ar[r]&\J_D\ar[r]&\O_D\ar[r]&\O_Z\ar[r]&0,
}
\]
one computes
\begin{equation}\label{29}
\Ext_{\O_D}^1(\O_Z,\O_D)=\J_D^\vee/\O_D=\R_D/\O_D=\R_h.
\end{equation}
which shows that $\R_h$ is an $\O_Z$-module.
If $D$ is free then, by Theorem~\ref{19}, $Z$ is Cohen--Macaulay of codimension one and $\omega_Z=\R_h$ by \eqref{29}.
\end{proof}

Part~\eqref{56e} of the following proposition can also be found in Faber's thesis (see \cite[Prop.~1.29]{Fab11}).

\begin{prp}\label{56}
Let $D$ be a free divisor.  
Then the following statements hold true:
\begin{asparaenum} 
\item\label{56e} $\frac{dh}h$ is part of an $\O_S$-basis of $\Omega^1(\log D)$ if and only if $D$ is Euler homogeneous;
\item\label{56d} $\frac{dh}h$ is part of an $\O_S$-basis of $\Omega^1(\log D)$ if and only if $1$ is part of a minimal set of $\O_D$-generators of $\R_D$;
\item\label{56g} $\R_D$ is a cyclic $\O_D$-module if and only if $D$ is smooth.
\end{asparaenum}
\end{prp}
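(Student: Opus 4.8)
The plan is to prove all three statements of Proposition~\ref{56} by exploiting the explicit residue description from Proposition~\ref{20} and Corollary~\ref{37}, treating the three parts in the order \eqref{56e}, \eqref{56d}, \eqref{56g} and reusing intermediate observations.

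\textbf{Part \eqref{56e}.} Recall that $D$ is Euler homogeneous at $0$ precisely when there is a vector field $\chi\in\Theta_S$ with $\chi(h)=h$, equivalently $dh(\chi)=h$. The plan is to show that the existence of such a $\chi$ is equivalent to $\frac{dh}{h}$ being part of an $\O_S$-basis of the free module $\Omega^1(\log D)$. For the forward direction, an Euler field $\chi$ is automatically logarithmic, and the pairing between $\Der(-\log D)$ and $\Omega^1(\log D)$ lets me test $\frac{dh}h$ against a basis of $\Der(-\log D)$: I would write $\ideal{\frac{dh}h,\delta}=\frac{\delta(h)}h\vert_D=\frac{dh(\delta)}h\vert_D$, which lands in $\O_D$ for logarithmic $\delta$, and use the Euler field to produce a dual basis element pairing to a unit. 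More precisely, freeness gives a basis $\delta_1,\dots,\delta_n$ of $\Der(-\log D)$, and Saito's criterion says $\det(dh(\delta_i)/h)$ is (up to a unit) a defining equation, i.e.\ a unit times $h$; Euler homogeneity allows one to arrange one $\delta_i$ with $dh(\delta_i)=h$, so $\frac{dh}h$ pairs to a unit with the corresponding dual coordinate and is thus part of a dual basis of $\Omega^1(\log D)$. Conversely, if $\frac{dh}h$ extends to an $\O_S$-basis $\frac{dh}h,\omega_2,\dots,\omega_n$ of $\Omega^1(\log D)$, I pass to the dual basis $\delta_1,\dots,\delta_n$ of $\Der(-\log D)$ and check that $\delta_1(h)=h\cdot\ideal{\frac{dh}h,\delta_1}=h$, exhibiting an Euler field. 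I expect the main technical care here to be the clean identification of the pairing and the dual basis; the Euler computation itself is short.

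\textbf{Part \eqref{56d}.} The plan is to transport the statement about $\frac{dh}h\in\Omega^1(\log D)$ across the residue sequence \eqref{2} and its quotient sequence in Proposition~\ref{20}. Since $\rho_D(\frac{dh}h)=1$ by the computation in \eqref{34} (writing $\omega=\frac{dh}h=\frac{dh}h\wedge\frac{1}{1}+0$ gives $\xi=1$, $g=1$), the element $\frac{dh}h$ maps to $1\in\R_D$ under $\rho_D$, and $\Omega^1_S=\ker\rho_D$ consists of non-minimal-generator contributions after tensoring with $\O_D$. Concretely, I would apply $-\otimes_{\O_S}\O_D$ (equivalently reduce modulo $\mm_S$ after using freeness) to the residue sequence: a minimal $\O_S$-generating set of the free module $\Omega^1(\log D)$ containing $\frac{dh}h$ maps to a minimal $\O_D$-generating set of $\R_D$ containing $\rho_D(\frac{dh}h)=1$, because $\Omega^1_S$ lies in $\mm_S\cdot\Omega^1(\log D)$ modulo the $\O_S$-span of $\frac{dh}h$ — this is where I must argue that the $\O_S$-generators coming from $\Omega^1_S$ do not interfere with minimality at the level of $\R_D$. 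The cleanest route is Nakayama: reducing the surjection $\Omega^1(\log D)\onto\R_D$ modulo $\mm_D$ and checking that $\frac{dh}h$ and $1$ correspond under the induced isomorphism of the respective $\CC$-vector spaces of generators.

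\textbf{Part \eqref{56g}.} Here the plan is: $\R_D$ is cyclic over $\O_D$ iff $1$ alone generates $\R_D$ (since $\O_D\subseteq\R_D$ always by Corollary~\ref{37}, any generator can be normalized so that $1$ is part of, hence equal to, a single-element generating set), iff $\R_D=\O_D$. By the chain in Corollary~\ref{37}, $\R_D=\O_D$ forces $\O_{\wt D}=\O_D$, so $D$ is normal; a normal hypersurface is smooth in codimension $1$, but more directly $\R_D=\O_D$ means the residue of every logarithmic $1$-form is already holomorphic on $D$, and combined with $\R_h=\R_D/\O_D=0$ in Proposition~\ref{20} this gives $Z=\emptyset$ via $\omega_Z=\R_h=0$ (for free $D$, the Cohen--Macaulay $Z$ has nonzero canonical module unless $Z=\emptyset$), whence $D$ is smooth. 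The converse is immediate since a smooth $D$ has $\wt D=D$ and $\R_D=\O_D$. I expect the only delicate point to be justifying that $\R_D$ cyclic implies $\R_D=\O_D$ rather than merely $\R_D\cong\O_D$ as abstract modules; I would resolve this using that $\R_D$ is a fractional ideal containing the unit $1$, so a single generator $r$ with $1\in\O_D\cdot r$ forces $r$ to be a unit and $\R_D=\O_D\cdot r=\O_D$. The main obstacle across all three parts is bookkeeping the pairing and the minimal-generator counts compatibly between $\Omega^1(\log D)$ and $\R_D$; once the identification $\rho_D(\frac{dh}h)=1$ and the $S_2/\text{Snake-Lemma}$ structure of Proposition~\ref{20} are in hand, each equivalence reduces to a short Nakayama argument.
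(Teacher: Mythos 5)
Your part~\eqref{56e} is correct and is essentially the paper's own argument: the perfect pairing between $\Der(-\log D)$ and $\Omega^1(\log D)$ turns $\ideal{\chi,\frac{dh}h}=\frac{\chi(h)}h=1$ into the statement that $\chi$, respectively $\frac{dh}h$, splits off a free direct summand. The forward direction of part~\eqref{56d}, however, rests on the claim that $\Omega^1_S\subseteq\mm_S\Omega^1(\log D)+\O_S\frac{dh}h$, equivalently that an $\O_S$-basis of $\Omega^1(\log D)$ maps under $\rho_D$ to a minimal set of $\O_D$-generators of $\R_D$. This is false whenever $D\cong D'\times(\CC,0)$: already for $D=\{x=0\}\subset(\CC^2,0)$ the basis element $dy$ lies in $\Omega^1_S$ and maps to $0$, so the basis $\{\frac{dx}x,dy\}$ maps to $\{1,0\}$, not a minimal generating set, and the ``induced isomorphism of $\CC$-vector spaces of generators'' you invoke does not exist — the surjection $\Omega^1(\log D)\otimes\CC\to\R_D\otimes\CC$ has kernel the image of $\Omega^1_S\otimes\CC$. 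The paper repairs exactly this by first reducing, via the triviality lemma, to the case $\Der(-\log D)\subseteq\mm_S\Theta_S$, which is dual to the statement that no basis element of $\Omega^1(\log D)$ lies in $\Omega^1_S$; only then does the replacement $\omega_1'=\omega_1-\sum_{i\ge2}a_i\omega_i$, which satisfies $\rho_D(\omega_1')=0$ and hence lies in $\Omega^1_S$ by \eqref{2}, give a contradiction. Your argument becomes correct once you insert this reduction (the statement itself is stable under splitting off smooth factors), but as written the key inclusion fails.

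In part~\eqref{56g} the step ``$\R_D$ cyclic and $1\in\R_D$ imply $\R_D=\O_D$'' is not justified, and your proposed resolution is incorrect: $1\in\O_D\cdot r$ only says $1=sr$ with $s\in\O_D$, which makes $r$ invertible in $\M_D$ but by no means a unit of $\O_D$. For instance $t^{-1}\CC\{t\}$ is a cyclic fractional ideal over $\CC\{t\}$ which contains $1$, yet its generator is not a unit and the ideal strictly contains the ring. So from cyclicity you only get an abstract isomorphism $\R_D\cong\O_D$, and the conclusion $\R_h=\R_D/\O_D=0$, on which your $\omega_Z$-argument depends, does not follow. The paper instead dualizes: by Corollary~\ref{24} freeness gives $\J_D=\R_D^\vee\cong\O_D$, and then $D$ is smooth by Lipman's criterion (or by the Jacobian criterion). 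Your finish via $\omega_Z=\R_h=0$ would be a pleasant alternative to Lipman once the actual equality $\R_D=\O_D$ is established, but the bridge from cyclicity to that equality is the missing content.
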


\begin{proof}\
\begin{asparaenum} 

\item This is immediate from the existence of a dual basis and the fact that $\chi\in\Der(-\log D)$ is an Euler vector field exactly if $\ideal{\chi,\frac{dh}h}=\frac{\chi(h)}h=1$. 
Then $\chi$ can be chosen as a member of some basis.

\item We may assume that $D\cong D'\times S'$ with $S'=(\CC^r,0)$ implies $r=0$.
Indeed, a basis of $\Omega^1(\log D)$ is the union of bases of $\Omega^1(\log D')$ and $\Omega_{S'}^1$.
This assumption is equivalent to $\Der(-\log D)\subseteq\mm_S\Theta_S$, where $\mm_S$ denotes the maximal ideal of $\O_S$. 
Dually, this means that no basis element of $\Omega^1(\log D)$ can lie in $\Omega_S^1$.

Consider a basis $\omega_1,\dots,\omega_n$ of $\Omega^1(\log D)$ with $\omega_1=\frac{dh}h$ and set $\rho_i:=\rho_D(\omega_i)$ for $i=1,\dots,n$.
If $\rho_1=1$ is not a member of some minimal set of generators of $\R_D$, then $\rho_1=\sum_{i=2}^na_i\rho_i$ with $a_i\in\mm_S$. 
Thus, the form $\omega_1':=\omega_1-\sum _{i=2}^na_i\omega_i$ can serve as a replacement for $\omega_1$ in the basis. 
But, by construction, $\rho_D(\omega'_1)=0$ which implies that $\omega'_1\in\Omega_S$ by \eqref{2}.
This contradicts our assumption on $D$.
 
Conversely, suppose that $\frac{dh}h$ is not a member of any $\O_S$-basis $\omega_1,\dots,\omega_n$ of $\Omega^1(\log D)$.
Then, by Nakayama's lemma, there are $a_i\in\mm_S$ such that $\frac{dh}h=\sum_{i=1}^na_i\omega_i$.
Applying $\rho_D$, this gives $1=\rho_D(\frac{dh}h)=\sum_{i=1}^na_i\rho_i\in\mm_D\R_D$.
Again by Nakayama's lemma, this means that $1$ is not a member of any minimal set of $\O_D$-generators of $\R_D$.

\item If $\R_D\cong\O_D$, then also $\J_D\cong\O_D$ by Corollary~\ref{24} then $D$ must be smooth by Lipman's criterion~\cite{Lip69}.
Alternatively, it follows that $\J_h+\O_S\cdot h=\O_S$ and hence also $\J_h=\O_S$ (see Remark~\ref{9}); so $D$ is smooth by the Jacobian criterion.

\end{asparaenum} 
\end{proof}

As observed by Faber~\cite[Rmk.~54]{Fab12}, condition~\eqref{56d} of Proposition~\ref{56} is satisfied given $\R_D=\O_{\wt D}$ (see Theorem~\ref{14}.\eqref{14d}).

\begin{proof}[Proof of Theorem~\ref{40}]
Assume that $Z$ is Gorenstein of codimension one in $D$.
The preimage $\J'_D$ of $\J_D$ in $\O_S$ is then a Gorenstein ideal of height two.
As such, it is a complete intersection ideal by a theorem of Serre (see \cite[Cor.~21.20]{Eis95}), and hence generated by two of the generators $h,\p_1(h),\dots,\p_n(h)$.
As in the proof of Theorem~\ref{16}, the triviality lemma~\cite[(3.5)]{Sai80} shows that $D$ is either smooth or $D\cong C\times(\CC^{n-2},0)$ and $C\subset(\CC^2,0)$ a plane curve.
Then also $C$ has Gorenstein singular locus and is hence quasihomogeneous by \cite{KW84}.
Alternatively, the last implication follows from Propositions~\ref{20} and \ref{56} using that quasihomogeneity of $C$ follows from Euler homogeneity of $C$ by Saito's quasihomogeneity criterion~\cite{Sai71} for isolated singularities.
Finally, $D$ is quasihomogeneous and $Z$ is a complete intersection.
The converse is trivial.
\end{proof}

\section*{Acknowledgements}

We are grateful to Eleonore Faber and David Mond for helpful discussions and comments, and to Ragnar-Olaf Buchweitz for pointing out the statement of Lemma~\ref{39} and for outlining a proof.
The foundation for this paper was laid during a ``Research in Pairs'' stay at the ``Mathematisches Forschungsinstitut Oberwolfach'' in summer 2011.
We thank the anonymous referee for careful reading and helpful comments.

\bibliographystyle{amsalpha}
\bibliography{dlr}

\providecommand{\bysame}{\leavevmode\hbox to3em{\hrulefill}\thinspace}
\providecommand{\MR}{\relax\ifhmode\unskip\space\fi MR }
\providecommand{\MRhref}[2]{%
  \href{http://www.ams.org/mathscinet-getitem?mr=#1}{#2}
}
\providecommand{\href}[2]{#2}
\begin{thebibliography}{dGMS09}

\bibitem[Ale88]{Ale88}
A.~G. Aleksandrov, \emph{Nonisolated {S}aito singularities}, Mat. Sb. (N.S.)
  \textbf{137(179)} (1988), no.~4, 554--567, 576. \MR{981525 (90b:32024)}

\bibitem[Ale12]{Ale12}
\bysame, \emph{Multidimensional residue theory and the logarithmic de {R}ham
  complex}, J. Singul. \textbf{5} (2012), 1--18. \MR{2928930}

\bibitem[AT01]{AT01}
Aleksandr~G. Aleksandrov and Avgust~K. Tsikh, \emph{Th\'eorie des r\'esidus de
  {L}eray et formes de {B}arlet sur une intersection compl\`ete singuli\`ere},
  C. R. Acad. Sci. Paris S\'er. I Math. \textbf{333} (2001), no.~11, 973--978.
  \MR{1872457 (2002m:32004)}

\bibitem[Bar78]{Bar78}
Daniel Barlet, \emph{Le faisceau {$\omega ^{\cdot }_{X}$} sur un espace
  analytique {$X$}\ de dimension pure}, Fonctions de plusieurs variables
  complexes, {III} ({S}\'em. {F}ran\c cois {N}orguet, 1975--1977), Lecture
  Notes in Math., vol. 670, Springer, Berlin, 1978, pp.~187--204. \MR{521919
  (80i:32037)}

\bibitem[BC13]{BC12}
Ragnar-Olaf Buchweitz and Aldo Conca, \emph{New free divisors from old}, J.
  Commut. Algebra \textbf{5} (2013), no.~1, 17--47. \MR{3084120}

\bibitem[BH93]{BH93}
Winfried Bruns and J{\"u}rgen Herzog, \emph{Cohen-{M}acaulay rings}, Cambridge
  Studies in Advanced Mathematics, vol.~39, Cambridge University Press,
  Cambridge, 1993. \MR{MR1251956 (95h:13020)}

\bibitem[Del71]{Del71}
Pierre Deligne, \emph{Th\'eorie de {H}odge. {II}}, Inst. Hautes \'Etudes Sci.
  Publ. Math. (1971), no.~40, 5--57. \MR{0498551 (58 \#16653a)}

\bibitem[Del72]{Del72}
\bysame, \emph{Les immeubles des groupes de tresses g\'en\'eralis\'es}, Invent.
  Math. \textbf{17} (1972), 273--302. \MR{0422673 (54 \#10659)}

\bibitem[Del81]{Del81}
\bysame, \emph{Le groupe fondamental du compl\'ement d'une courbe plane n'ayant
  que des points doubles ordinaires est ab\'elien (d'apr\`es {W}. {F}ulton)},
  Bourbaki {S}eminar, {V}ol. 1979/80, Lecture Notes in Math., vol. 842,
  Springer, Berlin, 1981, pp.~1--10. \MR{636513 (83f:14026)}

\bibitem[dGMS09]{GMS09}
Ignacio de~Gregorio, David Mond, and Christian Sevenheck, \emph{Linear free
  divisors and {F}robenius manifolds}, Compos. Math. \textbf{145} (2009),
  no.~5, 1305--1350. \MR{2551998 (2011b:32049)}

\bibitem[dJP00]{JP00}
Theo de~Jong and Gerhard Pfister, \emph{Local analytic geometry}, Advanced
  Lectures in Mathematics, Friedr. Vieweg \& Sohn, Braunschweig, 2000, Basic
  theory and applications. \MR{1760953 (2001c:32001)}

\bibitem[dJvS90]{JS90a}
T.~de~Jong and D.~van Straten, \emph{Deformations of the normalization of
  hypersurfaces}, Math. Ann. \textbf{288} (1990), no.~3, 527--547.
  \MR{MR1079877 (92d:32050)}

\bibitem[DM91]{DM91}
James Damon and David Mond, \emph{{${\mathcal A}$}-codimension and the
  vanishing topology of discriminants}, Invent. Math. \textbf{106} (1991),
  no.~2, 217--242. \MR{1128213 (92m:58011)}

\bibitem[Dol07]{Dol07}
Igor~V. Dolgachev, \emph{Logarithmic sheaves attached to arrangements of
  hyperplanes}, J. Math. Kyoto Univ. \textbf{47} (2007), no.~1, 35--64.
  \MR{2359100 (2008h:14018)}

\bibitem[DS12]{DS12}
Graham Denham and Mathias Schulze, \emph{Complexes, duality and {C}hern classes
  of logarithmic forms along hyperplane arrangements}, Arrangements of
  hyperplanes---{S}apporo 2009, Adv. Stud. Pure Math., vol.~62, Math. Soc.
  Japan, Tokyo, 2012, pp.~27--57. \MR{2933791}

\bibitem[EG85]{EG85}
E.~Graham Evans and Phillip Griffith, \emph{Syzygies}, London Mathematical
  Society Lecture Note Series, vol. 106, Cambridge University Press, Cambridge,
  1985. \MR{811636 (87b:13001)}

\bibitem[Eis95]{Eis95}
David Eisenbud, \emph{Commutative algebra with a view toward algebraic
  geometry}, Graduate Texts in Mathematics, vol. 150, Springer-Verlag, New
  York, 1995. \MR{1322960 (97a:13001)}

\bibitem[ER95]{ER95}
Paul~H. Edelman and Victor Reiner, \emph{Not all free arrangements are
  {$K(\pi,1)$}}, Bull. Amer. Math. Soc. (N.S.) \textbf{32} (1995), no.~1,
  61--65. \MR{1273396 (95g:52017)}

\bibitem[Fab11]{Fab11}
Eleonore Faber, \emph{{Normal crossings in local analytic geometry}}, Ph.D.
  thesis, Universit\"at Wien, 2011.

\bibitem[Fab12]{Fab12}
\bysame, \emph{Characterizing normal crossing hypersurfaces}, arXiv.org
  \textbf{math} (2012), no.~1201.6276.

\bibitem[Ful80]{Ful80}
William Fulton, \emph{On the fundamental group of the complement of a node
  curve}, Ann. of Math. (2) \textbf{111} (1980), no.~2, 407--409. \MR{569076
  (82e:14035)}

\bibitem[GMS11]{GMS11}
Michel Granger, David Mond, and Mathias Schulze, \emph{Free divisors in
  prehomogeneous vector spaces}, Proc. Lond. Math. Soc. (3) \textbf{102}
  (2011), no.~5, 923--950. \MR{2795728}

\bibitem[Har77]{Har77}
Robin Hartshorne, \emph{Algebraic geometry}, Springer-Verlag, New York, 1977,
  Graduate Texts in Mathematics, No. 52. \MR{MR0463157 (57 \#3116)}

\bibitem[Har80]{Har80}
\bysame, \emph{Stable reflexive sheaves}, Math. Ann. \textbf{254} (1980),
  no.~2, 121--176. \MR{MR597077 (82b:14011)}

\bibitem[HK71]{HK71}
J{\"u}rgen Herzog and Ernst Kunz (eds.), \emph{Der kanonische {M}odul eines
  {C}ohen-{M}acaulay-{R}ings}, Lecture Notes in Mathematics, Vol. 238,
  Springer-Verlag, Berlin, 1971, Seminar {\"u}ber die lokale Kohomologietheorie
  von Grothendieck, Universit{\"a}t Regensburg, Wintersemester 1970/1971.
  \MR{0412177 (54 \#304)}

\bibitem[KW84]{KW84}
Ernst Kunz and Rolf Waldi, \emph{\"{U}ber den {D}erivationenmodul und das
  {J}acobi-{I}deal von {K}urvensingularit\"aten}, Math. Z. \textbf{187} (1984),
  no.~1, 105--123. \MR{753425 (85j:14033)}

\bibitem[Ler59]{Ler59}
Jean Leray, \emph{Le calcul diff\'erentiel et int\'egral sur une vari\'et\'e
  analytique complexe. ({P}robl\`eme de {C}auchy. {III})}, Bull. Soc. Math.
  France \textbf{87} (1959), 81--180. \MR{0125984 (23 \#A3281)}

\bibitem[Lip69]{Lip69}
Joseph Lipman, \emph{On the {J}acobian ideal of the module of differentials},
  Proc. Amer. Math. Soc. \textbf{21} (1969), 422--426. \MR{0237511 (38 \#5793)}

\bibitem[Loo84]{Loo84}
E.~J.~N. Looijenga, \emph{Isolated singular points on complete intersections},
  London Mathematical Society Lecture Note Series, vol.~77, Cambridge
  University Press, Cambridge, 1984. \MR{MR747303 (86a:32021)}

\bibitem[LS84]{LS84}
{L{\^e} D{\~u}ng Tr{\'a}ng} and Kyoji Saito, \emph{The local {$\pi _{1}$} of
  the complement of a hypersurface with normal crossings in codimension {$1$}
  is abelian}, Ark. Mat. \textbf{22} (1984), no.~1, 1--24. \MR{735874
  (86a:32019)}

\bibitem[MP89]{MP89}
David Mond and Ruud Pellikaan, \emph{Fitting ideals and multiple points of
  analytic mappings}, Algebraic geometry and complex analysis ({P}\'atzcuaro,
  1987), Lecture Notes in Math., vol. 1414, Springer, Berlin, 1989,
  pp.~107--161. \MR{MR1042359 (91e:32035)}

\bibitem[MY82]{MY82}
John~N. Mather and Stephen S.~T. Yau, \emph{Classification of isolated
  hypersurface singularities by their moduli algebras}, Invent. Math.
  \textbf{69} (1982), no.~2, 243--251. \MR{MR674404 (84c:32007)}

\bibitem[OZ87]{OZ87}
Anna Oneto and Elsa Zatini, \emph{Jacobians and differents of projective
  varieties}, Manuscripta Math. \textbf{58} (1987), no.~4, 487--495. \MR{894866
  (88e:14027)}

\bibitem[Pie79]{Pie79}
Ragni Piene, \emph{Ideals associated to a desingularization}, Algebraic
  geometry ({P}roc. {S}ummer {M}eeting, {U}niv. {C}openhagen, {C}openhagen,
  1978), Lecture Notes in Math., vol. 732, Springer, Berlin, 1979,
  pp.~503--517. \MR{555713 (81a:14001)}

\bibitem[Poi87]{Poi87}
H.~Poincar{\'e}, \emph{Sur les r\'esidus des int\'egrales doubles}, Acta Math.
  \textbf{9} (1887), no.~1, 321--380. \MR{1554721}

\bibitem[Sai71]{Sai71}
Kyoji Saito, \emph{Quasihomogene isolierte {S}ingularit\"aten von
  {H}yperfl\"achen}, Invent. Math. \textbf{14} (1971), 123--142. \MR{MR0294699
  (45 \#3767)}

\bibitem[Sai80]{Sai80}
\bysame, \emph{Theory of logarithmic differential forms and logarithmic vector
  fields}, J. Fac. Sci. Univ. Tokyo Sect. IA Math. \textbf{27} (1980), no.~2,
  265--291. \MR{MR586450 (83h:32023)}

\bibitem[Sch64]{Sch64}
G{\"u}nter Scheja, \emph{Fortsetzungss\"atze der komplex-analytischen
  {C}ohomologie und ihre algebraische {C}harakterisierung}, Math. Ann.
  \textbf{157} (1964), 75--94. \MR{0176466 (31 \#738)}

\bibitem[Ter80a]{Ter80a}
Hiroaki Terao, \emph{Arrangements of hyperplanes and their freeness. {I}}, J.
  Fac. Sci. Univ. Tokyo Sect. IA Math. \textbf{27} (1980), no.~2, 293--312.
  \MR{MR586451 (84i:32016a)}

\bibitem[Ter80b]{Ter80b}
\bysame, \emph{Free arrangements of hyperplanes and unitary reflection groups},
  Proc. Japan Acad. Ser. A Math. Sci. \textbf{56} (1980), no.~8, 389--392.
  \MR{596011 (82e:32018a)}

\bibitem[vS95]{vST95}
D.~van Straten, \emph{A note on the discriminant of a space curve}, Manuscripta
  Math. \textbf{87} (1995), no.~2, 167--177. \MR{MR1334939 (96e:32032)}

\end{thebibliography}
\end{document}